\newtheorem{prop}{Proposition}[section]
\newtheorem{lem}[prop]{Lemma}
\newtheorem{thrm}[prop]{Theorem}
\theoremstyle{definition}
\newtheorem{defn}[prop]{Definition}
\newtheorem{ex}[prop]{Example}
\theoremstyle{remark}
\newtheorem{rmk}[prop]{Remark}
\newcommand{\F}{\mathcal F}
\newcommand{\wt}[1]{\widetilde #1}
\newcommand{\red}{\twoheadrightarrow}
\newcommand{\cored}{\rightarrowtail}
\renewcommand{\bar}[1]{\overline{#1}}
\DeclareMathOperator{\dom}{dom}
\title{Cores of Symplectic Double Groupoids Via Reduction}
\author{Santiago Ca\~nez}
\address{Department of Mathematics, Northwestern University, 2033 Sheridan Rd, Evanston IL 60208, USA}
\email{scanez@northwestern.edu}
\date{}
\begin{document}

\begin{abstract}
We use symplectic reduction to give a new construction of the core $C$ of a symplectic double groupoid $D$ as the common leaf space of characteristic foliations associated to various coisotropic submanifolds of $D$. In the case of the cotangent double groupoid of a Lie groupoid $G$, the canonical relations arising from this process turn out to be cotangent lifts of structure maps associated to $G$. We also show that under this reduction procedure the double groupoid structure on $D$ descends to a groupoid structure on the leaf space above, recovering the core groupoid structure on $C$ of Brown and Mackenzie.
\end{abstract}

\maketitle\thispagestyle{empty}

\section{Introduction}
A symplectic double groupoid is a symplectic manifold equipped with two compatible symplectic groupoid structures. Such objects arise naturally in the study of Poisson groupoids and Lie bialgebroids, and have proved to be useful in the quantization of Poisson-Lie groups (see for instance \cite{SZ}). Their theory, as well as that for double Lie groupoids in general, is well-developed~\cite{BM},\cite{M},\cite{LW}. The core of a double Lie groupoid $D$ is a submanifold of $D$ which encodes parts of the double groupoid structure, and pops up naturally in various constructions, such as in describing the symplectic double groupoid structure inherited by the cotangent bundle of a double Lie groupoid (see Theorem~\ref{thrm:ctdbl}). It is well-known that the core of a symplectic double groupoid itself naturally inherits the structure of a symplectic groupoid~\cite{M}. 

In this paper we describe a procedure for producing the core of a symplectic double grouopid $D$ together with its symplectic groupoid structure via symplectic reduction. Here by symplectic reduction we mean the process of taking the quotient of a coisotropic submanifold of a symplectic manifold by its characteristic foliation. Such a reduction gives rise to a canonical relation---i.e. a lagrangian submanifold of a product---between the original symplectic manifold and the reduced one. In our case, this reduction procedure is applied to various natural coisotropic submanifolds of $D$ encoded by the double groupoid structure, and the resulting canonical relations allow us to transport structures on $D$ to its core.

These results should be viewed as part of the program where the symplectic category---i.e. the ``category'' of symplectic manifolds where morphisms are given by canonical relations---is used to provide natural descriptions of various constructions in symplectic geometry. Indeed, the motivation for this work was the observation that for a Lie groupoid $G \rightrightarrows M$, the core $T^*M$ of the standard double groupoid structure of $T^*G$ (see Example~\ref{ex:main}) can be obtained via symplectic reduction and the resulting reduction relations endow $T^*G \rightrightarrows T^*M$ with a natural groupoid-like structure in the symplectic category. Such structures will be studied in detail in a future paper~\cite{C} where it will be shown that they provide an alternate characterization of symplectic double groupoids.

This paper is structured as follows. In Section~\ref{sec:can-rel} we recall some basic facts about canonical relations and Zakrzewki's characterization of symplectic groupoids in terms of such objects. Section~\ref{sec:dbl-grpds} gives background material on symplectic double groupoids and their cores and sets up notations we will use. Finally, Section~\ref{sec:core-red}, which constitutes the bulk of the paper, contains the construction of cores via symplectic reduction and reproduces their groupoid structure using canonical relations. The main technical result which makes this possible is Lemma~\ref{lem:fol}, which emphasizes the role of the entire double groupoid structure.

\subsubsection*{Acknowledgements}
This work is based on the author's Ph. D. thesis~\cite{C1}. I would like to thank my advisor, Alan Weinstein, for his many years of support and encouragement. Thanks also to Rajan Mehta for numerous comments and suggestions regarding the version of these results which appeared in the thesis cited above.

\section{Canonical Relations}\label{sec:can-rel}
\begin{defn}
A \emph{canonical relation} $R$ from a symplectic manifold $M$ to a symplectic manifold $N$ is a closed lagrangian submanifold of $\bar M \times N$, where $\bar M$ denotes $M$ with the opposite symplectic structure. We will use the notation $R: M \to N$ to mean that $R$ is a canonical relation from $M$ to $N$, and the notation $R: m \mapsto n$ to mean that $(m,n) \in R$. The \emph{transpose} of $R$ is the canonical relation $R^t: N \to M$ defined by the condition that $(n,m) \in R^t$ if $(m,n) \in R$.
\end{defn}

\begin{ex}
The graph of a symplectomorphism $f: P \to Q$ is a canonical relation $P \to Q$, which by abuse of notation we will also denote by $f$. In particular, given any symplectic manifold $P$, the graph of the identity map will be the canonical relation $id: P \to P$ given by the diagonal in $\overline P \times P$. More generally, the graph of a symplectic \'etale map is a canonical relation.
\end{ex}


\begin{ex}
For any symplectic manifold $S$, a canonical relation $pt \to S$ or $S \to pt$ is nothing but a closed lagrangian submanifold of $S$.
\end{ex}

The \emph{domain} of a canonical relation $R: M \to N$ is the subset
\[
\dom L := \{m \in M\ |\ \text{there exists $n \in N$ such that } (m,n) \in L\}
\]
of $M$. We define the composition of canonical relations using the usual composition of relations: given canonical relations $R: M \to N$ and $R': N \to Q$, the composition $R' \circ R: M \to Q$ is
\[
R' \circ R := \{ (m,q) \in M \times Q\ |\ \text{there exists } n \in N \text{ such that } (m,n) \in R \text{ and } (n,q) \in R'\}.
\]
This is the same as taking the intersection of $R \times R'$ and $M \times \Delta_N \times Q$ in $M \times N \times N \times Q$, where $\Delta_N$ denotes the diagonal in $N \times N$, and projecting to $M \times Q$. However, we run into the problem that the above composition need no longer be a smooth closed submanifold of $M \times Q$, either because the intersection of $R \times R'$ and $M \times \Delta_N \times Q$ is not smooth or because the projection to $M \times Q$ is ill-behaved, or both. To fix this, we introduce the following notion:

\begin{defn}
A pair $(R,R')$ of canonical relations $R: M \to N$ and $R': N \to Q$ is \emph{strongly transversal} if the submanifolds $R \times R'$ and $M \times \Delta_N \times Q$ of $M \times N \times N \times Q$ intersect transversally and the projection of
\[
(R \times R') \cap (M \times \Delta_N \times Q)
\]
to $M \times Q$ is a proper embedding.
\end{defn}

As a consequence, for a strongly transversal pair $(R,R')$ the composition $R' \circ R$ is indeed a submanifold of $M \times Q$, and the following is well known:

\begin{prop}
If $L: X \to Y$ and $L': Y \to Z$ are canonical relations with $(L,L')$ strongly transversal, then $L' \circ L: X \to Z$ is a canonical relation.
\end{prop}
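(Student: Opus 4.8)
The plan is to verify the three defining properties of a canonical relation for $L' \circ L \subseteq \bar X \times Z$ in turn: that it is a smooth embedded submanifold, that it is closed, and that it is lagrangian. The first two follow almost immediately from the strong transversality hypothesis, while the lagrangian condition splits into a dimension count together with a short bilinear computation establishing isotropy. Throughout I write $\omega_X, \omega_Y, \omega_Z$ for the symplectic forms on $X, Y, Z$.

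First I would set $W := \bar X \times Y \times \bar Y \times Z$ and regard both $L \times L'$ and $X \times \Delta_Y \times Z$ as submanifolds of $W$. Since a product of lagrangians is lagrangian, $L \times L'$ is lagrangian in $W$; and since $\Delta_Y$ is lagrangian in $Y \times \bar Y$ while the outer factors are the full symplectic manifolds $\bar X$ and $Z$, the submanifold $C := X \times \Delta_Y \times Z$ is coisotropic, with $TC^\omega$ equal to the diagonal directions $\{(0,v,v,0)\}$ in $TY \oplus T\bar Y$. By hypothesis the intersection $I := (L \times L') \cap C$ is transversal, hence a smooth submanifold of $W$, and the transversal intersection formula gives $\dim I = \dim(L \times L') + \dim C - \dim W = \tfrac12(\dim X + \dim Z)$. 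Again by hypothesis the projection $\pi \colon W \to \bar X \times Z$ restricts to a proper embedding on $I$, so $L' \circ L = \pi(I)$ is a closed embedded submanifold of $\bar X \times Z$ diffeomorphic to $I$, of dimension $\tfrac12 \dim(\bar X \times Z)$. This settles every requirement except isotropy, and reduces matters to proving that $L' \circ L$ is isotropic, since a half-dimensional isotropic submanifold is automatically lagrangian.

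For isotropy I would work at a point $(x,z) \in L' \circ L$, the image of some $(x,y,y,z) \in I$. Transversality identifies $T_{(x,z)}(L' \circ L)$ with the set of pairs $(\delta x, \delta z)$ for which there exists $\delta y \in T_y Y$ with $(\delta x, \delta y) \in T_{(x,y)} L$ and $(\delta y, \delta z) \in T_{(y,z)} L'$. Given two such tangent vectors $(\delta x, \delta z)$ and $(\delta x', \delta z')$ with witnesses $\delta y, \delta y'$, isotropy of $L$ gives $\omega_X(\delta x, \delta x') = \omega_Y(\delta y, \delta y')$ and isotropy of $L'$ gives $\omega_Z(\delta z, \delta z') = \omega_Y(\delta y, \delta y')$, so that the pairing $-\omega_X(\delta x, \delta x') + \omega_Z(\delta z, \delta z')$ defining the symplectic form on $\bar X \times Z$ vanishes. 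Hence $L' \circ L$ is isotropic, and the proof is complete.

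I expect the main obstacle to be the honest verification that transversality yields the stated description of $T_{(x,z)}(L' \circ L)$: one must check that $T_{(x,y,y,z)} I = T(L \times L') \cap TC$ projects isomorphically under $d\pi$ onto the claimed set of admissible $(\delta x, \delta z)$, using both that the intersection is transversal (so $TI$ is as large as the count demands) and that $\pi|_I$ is an embedding (so no tangent directions collapse under projection). Everything else—the dimension arithmetic and the two-line isotropy identity—is routine once this identification is in place.
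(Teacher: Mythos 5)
Your proof is correct, but there is nothing in the paper to compare it against: the paper states this proposition as ``well known'' and gives no proof of it. Your argument is the standard one that the phrase ``well known'' points to. The three-part decomposition is exactly right: strong transversality makes $I = (L \times L') \cap (X \times \Delta_Y \times Z)$ a smooth submanifold whose dimension is $\dim(L\times L') + \dim C - \dim W = \tfrac12(\dim X + \dim Z)$; the proper-embedding hypothesis makes $L' \circ L = \pi(I)$ a closed embedded submanifold of $\bar X \times Z$ of half dimension; and isotropy follows from the two identities $\omega_X(\delta x, \delta x') = \omega_Y(\delta y, \delta y')$ and $\omega_Z(\delta z, \delta z') = \omega_Y(\delta y, \delta y')$, which cancel against each other in the form $-\omega_X \oplus \omega_Z$. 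Your observation that $C$ is coisotropic with characteristic directions $\{(0,v,v,0)\}$ is also correct, though it is not actually needed anywhere in the argument.

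One remark on the point you flag as the main obstacle: the tangent-space identification is less delicate than you suggest, because the two halves of strong transversality are used in disjoint places and neither requires computing $d\pi(T_{(x,y,y,z)}I)$ exactly. For the isotropy step you only need the inclusion $T I \subseteq T(L \times L') \cap TC$, which holds for any intersection of submanifolds whatsoever, together with the fact that $\pi|_I$ is a diffeomorphism onto $L' \circ L$, so that $d\pi(TI) = T(L' \circ L)$ and every tangent vector $(\delta x, \delta z)$ admits a witness $\delta y$. Transversality proper is consumed entirely by the dimension count. So there is no step where you must verify that $d\pi$ carries $T(L\times L') \cap TC$ isomorphically onto a prescribed set; the inclusion in one direction plus surjectivity of $d\pi|_{TI}$ onto $T(L'\circ L)$ suffices, and both come for free from the hypotheses.
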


\begin{defn}
A canonical relation $R: M \to N$ is said to be:
\begin{itemize}
\item \emph{surjective} if for any $n \in N$ there exists $m \in M$ such that $(m,n) \in R$,
\item \emph{coinjective} if whenever $(m,n), (m,n') \in R$ we have $n=n'$.
\end{itemize}
\end{defn}

\begin{defn}
A canonical relation $R: M \to N$ is said to be a \emph{reduction} if it is surjective and coinjective, the projection of $R$ to $M$ is a proper embedding, and the projection of $R$ to $N$ is a submersion; it is a \emph{coreduction} if $R^t: N \to M$ is a reduction.
\end{defn}

\begin{rmk}
A pair $(R,R')$ of canonical relations is always strongly transversal if either $R$ is a reduction or $R'$ a coreduction.
\end{rmk}

The use of the term ``reduction'' is motivated by the following example.

\begin{ex}(Symplectic Reduction)
Let $(M,\omega)$ be a symplectic manifold and $C$ a coisotropic submanifold. The distribution on $C$ given by $\ker\omega \subset TC$, called the \emph{characteristic distribution} of $C$, is integrable and the induced foliation $C^\perp$ on $C$ is called the \emph{characteristic foliation} of $C$. If the leaf space $C/C^\perp$ is smooth and the projection $C \to C/C^\perp$ is a submersion, then $C/C^\perp$ naturally inherits a symplectic structure and the relation
\[
red: M \red C/C^\perp
\]
assigning to an element of $C$ the leaf which contains it is a canonical relation which is a reduction in the sense above. The construction of $C/C^\perp$ from $M$ and $C$ is called \emph{symplectic reduction}. Symplectic reduction via Hamiltonian actions of Lie groups is a special case.
\end{ex}

\begin{ex}\label{ex:std-reds}
We also note two more well-known examples of symplectic reduction. Suppose that $X$ is a manifold with $Y \subseteq X$ a submanifold. Then the restricted cotangent bundle $T^*X|_Y$ is a coisotropic submanifold of $T^*X$ whose reduction is symplectomorhpic to $T^*Y$. Thus we obtain a reduction $T^*X \twoheadrightarrow T^*Y$.

As a generalization, suppose now that $\F$ is a regular foliation on $Y \subseteq X$ with smooth, Hausdorff leaf space $Y/\F$. Then the conormal bundle $N^*\F$ is a coisotropic submanifold of $T^*X$ (this is in fact equivalent to the distribution $T\F$ being integrable) and its reduction is canonically symplectomorphic to $T^*(Y/\F)$, giving rise to a reduction relation $T^*X \twoheadrightarrow T^*(Y/\F)$. The previous example is the case where $\F$ is the zero-dimensional foliation given by the points of $Y$.
\end{ex}

To a smooth map $f: M \to N$ between manifolds $M$ and $N$ we can associate the canonical relation $T^*f: T^*M \to T^*N$ given by
\[
T^*f: (p,df_p^*\xi) \mapsto (f(p),\xi).
\]
This is nothing but the composition $T^*M \to \overline{T^*M} \to T^*N$ of the Schwartz transform of $T^*M$ followed by the canonical relation given by the conormal bundle to the graph of $f$ in $M \times N$. We call $T^*f$ the \emph{cotangent lift} of $f$.

\begin{ex}
When $\phi: M \to N$ is a diffeomorphism, $T^*\phi: T^*M \to T^*N$ is the graph of the usual lifted symplectomorphism.
\end{ex}

\begin{rmk}
Canonical relations can (almost) be viewed as the morphisms of a category known as the \emph{symplectic category}. In this setting, $T^*$ becomes a functor from the category of smooth manifolds to the symplectic category.
\end{rmk}

In the categorical framework mentioned above, the following is a natural notion to consider:

\begin{defn}
A \emph{symplectic monoid} is a triple $(S,m,e)$ consisting of a symplectic manifold $S$ together with canonical relations
\[
m: S \times S \to S \text{ and } e: pt \to S,
\]
called the \emph{product} and \emph{unit} respectively, so that
\begin{center}
\begin{tikzpicture}[>=angle 90]
	\node (UL) at (0,1) {$S \times S \times S$};
	\node (UR) at (3,1) {$S \times S$};
	\node (LL) at (0,-1) {$S \times S$};
	\node (LR) at (3,-1) {$P$};
	
	\tikzset{font=\scriptsize};
	\draw[->] (UL) to node [above] {$id \times m$} (UR);
	\draw[->] (UL) to node [left] {$m \times id$} (LL);
	\draw[->] (UR) to node [right] {$m$} (LR);
	\draw[->] (LL) to node [above] {$m$} (LR);
\end{tikzpicture}
\end{center}
and
\begin{center}
\begin{tikzpicture}[thick]
	\node (UL) at (0,1) {$S$};
	\node (UR) at (3,1) {$S \times S$};
	\node (URR) at (6,1) {$S$};
	\node (LR) at (3,-1) {$S$};

	\tikzset{font=\scriptsize};	
	\draw[->] (UL) to node [above] {$e \times id$} (UR);
	\draw[->] (UR) to node [right] {$m$} (LR);
	\draw[->] (URR) to node [above] {$id \times e$} (UR);
	\draw[->] (UL) to node [above] {$id$} (LR);
	\draw[->] (URR) to node [above] {$id$} (LR);
\end{tikzpicture}
\end{center}
commute. We require that all compositions involved be strongly transversal.
\end{defn}

\begin{ex}
For a symplectic groupoid $S \rightrightarrows P$, $S$ together with the groupoid multiplication thought of as a relation $S \times S \to S$ and the canonical relation $pt \to S$ given by the image of the unit embedding $P \to S$ is a symplectic monoid.
\end{ex}

Zakrzewski gave in \cite{SZ1}, \cite{SZ2} a complete characterization of symplectic groupoids in terms of such structures, or more specifically, symplectic monoids equipped with a $*$-structure:

\begin{defn}
A \emph{*-structure} on a symplectic monoid $S$ is an anti-symplectomorphism $s: S \to S$ (equivalently a symplectomorphism $s: \overline S \to S$) such that $s^2 = id$ and the diagram
\begin{center}
\begin{tikzpicture}[>=angle 90]
	\node (UL) at (0,1) {$\overline{S} \times \overline{S}$};
	\node (U) at (3,1) {$\overline{S} \times \overline{S}$};
	\node (UR) at (6,1) {$S \times S$};
	\node (LL) at (0,-1) {$\overline{S}$};
	\node (LR) at (6,-1) {$S,$};

	\tikzset{font=\scriptsize};
	\draw[->] (UL) to node [above] {$\sigma$} (U);
	\draw[->] (U) to node [above] {$s \times s$} (UR);
	\draw[->] (UL) to node [left] {$m$} (LL);
	\draw[->] (UR) to node [right] {$m$} (LR);
	\draw[->] (LL) to node [above] {$s$} (LR);
\end{tikzpicture}
\end{center}
where $\sigma$ is the symplectomorphism exchanging components, commutes. A symplectic monoid equipped with a $*$-structure will be called a \emph{symplectic $*$-monoid}.

A $*$-structure $s$ is said to be \emph{strongly positive} if the diagram
\begin{center}
\begin{tikzpicture}[>=angle 90]
	\node (UL) at (0,1) {$S \times \overline{S}$};
	\node (UR) at (3,1) {$S \times S$};
	\node (LL) at (0,-1) {$pt$};
	\node (LR) at (3,-1) {$S,$};

	\tikzset{font=\scriptsize};
	\draw[->] (UL) to node [above] {$id \times s$} (UR);
	\draw[->] (LL) to node [left] {$$} (UL);
	\draw[->] (UR) to node [right] {$m$} (LR);
	\draw[->] (LL) to node [above] {$e$} (LR);
\end{tikzpicture}
\end{center}
where $pt \to S \times \overline{S}$ is the morphism given by the diagonal of $S \times \overline{S}$, commutes.
\end{defn}

\begin{thrm}[Zakrzewski, \cite{SZ1}\cite{SZ2}]
Symplectic groupoids are in $1$-$1$ correspondence with strongly positive symplectic $*$-monoids.
\end{thrm}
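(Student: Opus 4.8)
The plan is to exhibit mutually inverse assignments between symplectic groupoids and strongly positive symplectic $*$-monoids. Starting from a symplectic groupoid $S \rightrightarrows P$, I would take $m$ to be the graph of the partially defined groupoid product, regarded as a relation $S \times S \to S$, take $e \colon pt \to S$ to be the image $\im(P \hookrightarrow S)$ of the unit section, and take $s \colon S \to S$ to be inversion. The single geometric input is that the symplectic form on $S$ is multiplicative: this is exactly the assertion that the graph of $m$ is lagrangian in $\bar S \times \bar S \times S$, and it simultaneously forces the unit section to have lagrangian image and inversion to be an anti-symplectomorphism. Granting this, each of the defining diagrams unwinds to a familiar groupoid identity — associativity of $m$ as a relation is associativity of the product, the unit diagram is the unit law, $s^2 = id$ is involutivity of inversion, the $*$-diagram is the anti-automorphism law $(gh)^{-1} = h^{-1}g^{-1}$, and strong positivity records that $g \cdot g^{-1}$ is the unit over $g$, so that the composite $m \circ (id \times s)$ applied to the diagonal equals $e$. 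Strong transversality of the composites follows from submersivity of the source and target maps.

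The substance is in the reverse direction. Given a strongly positive symplectic $*$-monoid $(S,m,e,s)$, I would recover the candidate base as the lagrangian $L := \im e \subseteq S$, declare inversion to be $s$, and read off source and target from strong positivity: the assignments $g \mapsto m(g,s(g))$ and $g \mapsto m(s(g),g)$ land in $L$ (the first directly from the strong positivity diagram, the second by applying strong positivity to $s(g)$ and using $s^2 = id$). The unit and associativity diagrams then present $m$, restricted to composable pairs, as a unital associative partial product, while the $*$-diagram together with strong positivity supplies two-sided inverses. One then transports the smooth structure of $L$ to $P$ and checks that source, target, unit, inversion, and the partial product obey the Lie groupoid axioms, the lagrangian condition on $m$ reappearing as multiplicativity of the symplectic form.

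The hard part is reconstructing an honest smooth groupoid, and it splits into two coupled problems. First, one must show that the canonical relation $m$ is the graph of a smooth partial map — equivalently that $m$ is single-valued on composable pairs — since this is not assumed but must be forced out of the axioms. The mechanism I would use is that the unit diagram forces the composite $m \circ (e \times id)$ to equal the identity relation, so that left multiplication by the appropriate unit is a diffeomorphism; under strong transversality this rigidifies one projection of $m$ into a fibration, strong positivity pins down inverses uniquely, and associativity propagates single-valuedness to all of $m$. Second, one must verify that $L$ is a smooth submanifold on which the recovered source and target restrict to submersions with $L$ lagrangian, so that $P := L$ really underlies a Lie groupoid. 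Throughout, the strong-transversality clauses built into the definition of a symplectic monoid are indispensable: they guarantee that every composite relation in the diagrams is again a smooth canonical relation and that the dimension counts forcing the relevant lagrangians to be graphs actually hold.

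Finally I would close the loop by checking the two constructions are mutually inverse. Feeding a symplectic groupoid through the forward and then the reverse construction returns its original source, target, unit, inversion, and product, since each was identified intrinsically above; conversely, the $*$-monoid built from the reconstructed groupoid recovers the given $m$, $e$, and $s$. Once the structure maps are matched on both sides this last step is routine bookkeeping, and the genuine difficulty remains concentrated in the single-valuedness and smoothness arguments of the reverse direction.
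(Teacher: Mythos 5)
A point of reference first: the paper does not prove this theorem at all --- it is stated as background, with the proof residing in Zakrzewski's papers \cite{SZ1}, \cite{SZ2} --- so your proposal can only be measured against that source. Your forward direction (symplectic groupoid $\Rightarrow$ strongly positive symplectic $*$-monoid) is the routine half and is essentially correct: multiplicativity of the symplectic form is exactly the statement that the graph of the product is lagrangian in $\overline{S} \times \overline{S} \times S$, this forces the unit section to be lagrangian and inversion to be an anti-symplectomorphism, and the diagrams unwind to the groupoid identities. One caveat: your blanket claim that strong transversality of all composites follows from the source/target submersions is false for the strong-positivity composite. In a genuine symplectic groupoid the intersection computing $m \circ (id \times s) \circ \Delta$ is identified with $S$ itself, projecting to the unit submanifold with fibers the target fibers of the groupoid, so the projection is nowhere an embedding; only set-theoretic commutativity holds there, which is all the definition demands (note the paper imposes strong transversality only in the definition of symplectic monoid, not in the $*$-structure or positivity diagrams).

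The genuine gap is in the reverse direction, which is where the entire content of the theorem lies, and it occurs at two places you present as either direct or mechanical. First, your source map $g \mapsto m(g,s(g))$ is not yet defined: commutativity of the strong-positivity diagram says only that the \emph{union} over all $g$ of the sets $m(g,s(g))$ equals $\im e$; it does not say that $m(g,s(g))$ is nonempty for a given $g$, nor that it is a singleton, and both are needed before "read off source and target from strong positivity" means anything. Extracting these facts requires an argument combining the unit, associativity, and $*$-axioms with the lagrangian condition --- it is not a diagram chase. Second, your mechanism for single-valuedness of $m$ on composable pairs ("the unit diagram forces left multiplication by the appropriate unit to be a diffeomorphism; under strong transversality this rigidifies one projection of $m$ into a fibration; associativity propagates single-valuedness") is a chain of assertions, not deductions: the unit diagram only says that units act as identities \emph{where the product is defined}, which yields no fibration property of $m$, and nothing in your sketch explains how the lagrangian condition enters --- yet it must, since $\dim m = \tfrac{3}{2}\dim S$ is precisely what makes it possible for $m$ to be the graph of a partial map on a composable-pair submanifold of that dimension, and purely relational axioms cannot see this. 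The same applies to smoothness of the reconstructed structure maps and the submersion property of source and target. These are exactly the points where Zakrzewski's two papers expend their effort (first the relational algebra, then the differential-geometric single-valuedness and smoothness), so your proposal has the correct skeleton and correctly locates the difficulty, but the difficult steps are named rather than performed, and the one step you claim follows "directly" does not.
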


It is this point of view of symplectic groupoids which we will use to describe the groupoid structure on the core of a double groupoid.

\section{Symplectic Double Groupoids}\label{sec:dbl-grpds}

\begin{defn}
A \emph{double Lie groupoid} is a diagram
\begin{equation}\label{defn:dbl-grpd}
\begin{tikzpicture}[>=angle 90,scale=.85,baseline=(current  bounding  box.center)]
	\node at (0,0) {$H$};
	\node at (2,0) {$M$};
	\node at (0,2) {$D$};
	\node at (2,2) {$V$};

	\tikzset{font=\scriptsize};
	\draw[->] (.4,.08) -- (1.6,.08);
	\draw[->] (.4,-.08) -- (1.6,-.08);
	
	\draw[->] (-.08,1.6) -- (-.08,.4);
	\draw[->] (.08,1.6) -- (.08,.4);
	
	\draw[->] (.4,2+.08) -- (1.6,2+.08);
	\draw[->] (.4,2-.08) -- (1.6,2-.08);
	
	\draw[->] (2-.08,1.6) -- (2-.08,.4);
	\draw[->] (2+.08,1.6) -- (2+.08,.4);
\end{tikzpicture}
\end{equation}
of Lie groupoids such that the structure maps of the top and bottom groupoids are homomorphisms between the left and right groupoids, and vice-versa. For technical reasons, we assume that the \emph{double source map}
\[
D \to H \times_M V
\]
is a surjective submersion. Here the fiber product is taken with respect to the source maps for $H \rightrightarrows M$ and $V \rightrightarrows M$. We will refer to the various groupoid structures in this diagram as the top, left, bottom, and right groupoids, and often refer to $D$ itself as the double Lie groupoid.
\end{defn}

\begin{rmk}
The double groupoid structure on $D$ should not depend on the manner in which we have chosen to draw the diagram above. In other words, we will think of
\begin{center}
\begin{tikzpicture}[>=angle 90,scale=.85]
	\node at (0,0) {$V$};
	\node at (2,0) {$M$};
	\node at (0,2) {$D$};
	\node at (2,2) {$H$};

	\tikzset{font=\scriptsize};
	\draw[->] (.4,.08) -- (1.6,.08);
	\draw[->] (.4,-.08) -- (1.6,-.08);
	
	\draw[->] (-.08,1.6) -- (-.08,.4);
	\draw[->] (.08,1.6) -- (.08,.4);
	
	\draw[->] (.4,2+.08) -- (1.6,2+.08);
	\draw[->] (.4,2-.08) -- (1.6,2-.08);
	
	\draw[->] (2-.08,1.6) -- (2-.08,.4);
	\draw[->] (2+.08,1.6) -- (2+.08,.4);
\end{tikzpicture}
\end{center}
as representing the same double groupoid as above, and will call this the \emph{transpose} of the previous structure.
\end{rmk}

\begin{ex}\label{ex:dmain}
For any Lie groupoid $G \rightrightarrows M$, there is a double Lie groupoid structure on
\begin{center}
\begin{tikzpicture}[>=angle 90]
	\node (LL) at (0,0) {$M$};
	\node (LR) at (2,0) {$M.$};
	\node (UL) at (0,2) {$G$};
	\node (UR) at (2,2) {$G$};

	\tikzset{font=\scriptsize};
	\draw[->] (.4,.07) -- (1.6,.07);
	\draw[->] (.4,-.07) -- (1.6,-.07);
	
	\draw[->] (-.07,1.6) -- (-.07,.4);
	\draw[->] (.07,1.6) -- (.07,.4);
	
	\draw[->] (.4,2.07) -- (1.7,2.07);
	\draw[->] (.4,2-.07) -- (1.7,2-.07);
	
	\draw[->] (2-.07,1.6) -- (2-.07,.4);
	\draw[->] (2+.07,1.6) -- (2+.07,.4);
\end{tikzpicture}
\end{center}
Here, the left and right sides are the given groupoid structures, while the top and bottom are trivial groupoids.
\end{ex}

\begin{ex}\label{ex:dinertia}
Again for any Lie groupoid $G \rightrightarrows M$, there is a double Lie groupoid structure on
\begin{center}
\begin{tikzpicture}[>=angle 90]
	\node (LL) at (0,0) {$M \times M$};
	\node (LR) at (3,0) {$M.$};
	\node (UL) at (0,2) {$G \times G$};
	\node (UR) at (3,2) {$G$};

	\tikzset{font=\scriptsize};
	\draw[->] (.9,.07) -- (2.6,.07);
	\draw[->] (.9,-.07) -- (2.6,-.07);
	
	\draw[->] (-.07,1.6) -- (-.07,.4);
	\draw[->] (.07,1.6) -- (.07,.4);
	
	\draw[->] (.9,2.07) -- (2.5,2.07);
	\draw[->] (.9,2-.07) -- (2.5,2-.07);
	
	\draw[->] (3-.07,1.6) -- (3-.07,.4);
	\draw[->] (3.07,1.6) -- (3.07,.4);
\end{tikzpicture}
\end{center}
Here, the right side is the given groupoid structure, the top and bottom are pair groupoids, and the left is a product groupoid.
\end{ex}

For what follows we will need a consistent labeling of the structure maps involved in the various groupoids considered. First, the source, target, unit, inverse, and product of the right groupoid $V \rightrightarrows M$ are respectively
\[
r_V(\cdot), \ell_V(\cdot), 1^V_\cdot, i_V(\cdot), m_V(\cdot,\cdot) \text{ or } \cdot \circ_V \cdot
\]
The structure maps of $H \rightrightarrows M$ will use the same symbols with $V$ replaced by $H$. Now, the structure maps of the top and left groupoids will use the same symbol as those of the \emph{opposite} structure with a tilde on top; so for example, the structure maps of $D \rightrightarrows H$ are
\[
\widetilde{r}_V(\cdot), \widetilde{\ell}_V(\cdot), \widetilde{1}^V_\cdot, \widetilde{i}_V(\cdot), \wt m_V(\cdot,\cdot) \text{ or } \cdot \widetilde{\circ}_V \cdot
\]

To emphasize: the structure maps of the groupoid structure on $D$ \emph{over} $V$ use an $H$, and those of the groupoid structure on $D$ \emph{over} $H$ use a $V$. This has a nice practical benefit in that it is simpler to keep track of the various relations these maps satisfy; for example, the maps $\widetilde{r}_{H}$ and $r_{H}$ give the groupoid homomorphism
\begin{center}
\begin{tikzpicture}[>=angle 90,scale=.75]
	\node (LL) at (0,0) {$H$};
	\node (LR) at (2,0) {$M,$};
	\node (UL) at (0,2) {$D$};
	\node (UR) at (2,2) {$V$};

	\tikzset{font=\scriptsize};
	\draw[->] (UL) to node [above] {$\widetilde{r}_{H}$} (UR);
	
	\draw[->] (-.07,1.6) -- (-.07,.4);
	\draw[->] (.07,1.6) -- (.07,.4);
	
	\draw[->] (2-.07,1.6) -- (2-.07,.4);
	\draw[->] (2+.07,1.6) -- (2+.07,.4);
	
	\draw[->] (LL) -- node [above] {$r_{H}$} (LR);
\end{tikzpicture}
\end{center}
so for instance we have: $\ell_V(\wt r_H(s)) = r_H(\wt \ell_V(s))$, $i_V(\wt r_H(s)) = \wt r_H(\wt i_V(s))$, $\wt r_H(\wt 1^V_v) = 1^V_{r_H(v)}$, etc.

We denote elements of $D$ as squares with sides labeled by the possible sources and targets:
\begin{center}
\begin{tikzpicture}[scale=.75]
	\draw (0,0) rectangle (2,2);
	\node at (1,1) {$s$};

	\tikzset{font=\scriptsize};
	\node at (-.6,1) {$\wt\ell_V(s)$};
	\node at (1,2.4) {$\wt r_{H}(s)$};
	\node at (2.6,1) {$\wt r_V(s)$};
	\node at (1,-.4) {$\wt\ell_{H}(s)$};
\end{tikzpicture}
\end{center}
so that the left and right sides are the target and source of the left groupoid structure while the top and bottom sides are the source and target of the top groupoid structure. With this notation, composition in the left groupoid $D \rightrightarrows H$ can be viewed as ``horizontal concatenation'' and while composition in the top groupoid $D \rightrightarrows V$ is ``vertical concatenation''. The compatibility between the two groupoid products on $D$ can then be expressed as saying that composing vertically and then horizontally in
\begin{center}
\begin{tikzpicture}
	\draw (0,0) rectangle (1,1);
	\node at (.5,.5) {$a$};
	\draw (1,0) rectangle (2,1);
	\node at (1.5,.5) {$b$};
	\draw (0,1) rectangle (1,2);
	\node at (.5,1.5) {$c$};
	\draw (1,1) rectangle (2,2);
	\node at (1.5,1.5) {$d$};
\end{tikzpicture}
\end{center}
produces the same result as composing horizontally and then vertically, whenever all compositions involved are defined.

\begin{defn}
The \emph{core} of a double Lie groupoid $D$ is the submanifold $C$ of elements of $D$ whose sources are both units; that is, the set of elements of the form
\begin{center}
\begin{tikzpicture}[scale=.75]
	\node at (1,1) {$s$};
	
	\tikzset{font=\scriptsize};
	\draw (0,0) rectangle (2,2);
	\node at (-.7,1) {$\wt\ell_{V}(s)$};
	\node at (1,2.4) {$1^V_m$};
	\node at (2.6,1) {$1^{H}_m$};
	\node at (1,-.4) {$\wt\ell_H(s)$};
\end{tikzpicture}
\end{center}
The condition on the double source map in the definition of a double Lie groupoid ensures that $C$ is a submanifold of $D$.
\end{defn}

\begin{thrm}[Brown-Mackenzie, \cite{BM}]\label{core-grpd}
The core of a double Lie groupoid $D$ has a natural Lie groupoid structure over the double base $M$.
\end{thrm}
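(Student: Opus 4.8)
The plan is to write down every structure map of the prospective groupoid $C \rightrightarrows M$ directly in terms of the four groupoid structures carried by $D$, and then to check the groupoid axioms by repeatedly invoking the compatibility law relating vertical and horizontal composition in $D$.

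First I would unwind the definition of the core. For $s \in C$ the two source sides are forced to be units, $\wt r_V(s) = 1^H_m$ and $\wt r_H(s) = 1^V_m$, where the two appearances of $m \in M$ coincide because of the condition on the double source map. The homomorphism identities among the structure maps (of the type displayed just before the square notation) then show that the two remaining sides consist of an $H$-arrow $\wt\ell_V(s)$ and a $V$-arrow $\wt\ell_H(s)$ which share the source $m$ and share a common target $n := \ell_H(\wt\ell_V(s)) = \ell_V(\wt\ell_H(s))$. I would declare these two base points to be the source and target of $s$, setting $\mathbf{s}(s) := m$ and $\mathbf{t}(s) := n$ as maps $C \to M$, and I would take the identity at $m \in M$ to be the double unit $\wt 1^V_{1^H_m} = \wt 1^H_{1^V_m}$, whose four sides are all units and which therefore lies in $C$. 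Each of these maps is a composite of smooth structure maps of $D$, so smoothness is automatic; submersivity of $\mathbf{s}$ will follow from that of the structure maps of $D$ together with the double source map condition.

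The crux is the multiplication. Two nontrivial core elements can be composed neither in the top nor in the left groupoid directly, since the sides along which one would glue are constrained to be units that need not meet the nontrivial legs of the partner. The remedy is to form, for a composable pair $s, s'$, the $2\times 2$ array having $s$ and $s'$ in two opposite corners and, in the remaining two corners, the unique unit squares (units in the top, respectively left, groupoid over the appropriate legs of $s$ and $s'$) whose sides make all interior edges match. One then composes this array to a single element of $D$. The interchange law --- that composing vertically-then-horizontally agrees with composing horizontally-then-vertically --- guarantees that the two composition orders yield the same element, so the product is well defined; a direct inspection of the four resulting sides shows it again lies in $C$ and carries the expected source and target. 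The inverse of $s$ I would build from the two groupoid inverses $\wt i_V$ and $\wt i_H$, applied so as to interchange the roles of $m$ and $n$, and verify directly that it is again a core element.

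It then remains to check the groupoid axioms. The unit laws reduce to the unit laws in the top and left groupoids once one observes that the padding unit squares collapse as required. The essential obstacle is associativity: I would encode a triple product by a suitable $3\times 3$ array of squares, with the three factors on a diagonal and units elsewhere, and show that both bracketings compute the full composite of the array; this comes down to associativity of each of the two multiplications on $D$ together with repeated use of the interchange law. I expect the genuine work to be bookkeeping --- verifying that all indicated compositions are defined and tracking the inserted units --- rather than anything conceptually new, since every identification is dictated by the double groupoid compatibility. Finally, smoothness of the product and of the inverse follows from smoothness of the structure maps of $D$, completing the construction of $C \rightrightarrows M$ as a Lie groupoid over the double base $M$.
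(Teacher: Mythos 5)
Your construction is the one the paper records: note first that the paper does not actually prove this theorem---it cites Brown--Mackenzie for the proof and only writes down the structure, namely that the core product of $s$ and $s'$ is obtained by composing (in either order, by the interchange law) the $2\times 2$ array with $s$ and $s'$ in opposite corners and the unit squares $\wt 1^V_{\wt\ell_V(s')}$ and $\wt 1^{H}_{\wt\ell_{H}(s')}$ in the remaining corners. Your source, target, unit, and product, as well as the plan of proving associativity by a $3\times 3$ array and repeated use of the interchange law, are exactly this construction fleshed out, so in that respect the proposal follows the paper (and Brown--Mackenzie) faithfully.

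There is one concrete defect: the inverse as you describe it would fail your own check that the result ``is again a core element.'' For $s \in C$, the composite of the two groupoid inverses $\wt i_V \circ \wt i_{H} = \wt i_{H} \circ \wt i_V$ does \emph{not} land in $C$: since $\wt i_{H}$ is a homomorphism of the left groupoid over $i_{H}$, and $\wt i_V$ exchanges $\wt\ell_V$ with $\wt r_V$, one computes $\wt r_V(\wt i_V \wt i_{H}(s)) = i_{H}(\wt\ell_V(s))$ and $\wt r_{H}(\wt i_V \wt i_{H}(s)) = i_V(\wt\ell_{H}(s))$, which are not units in general; rather, both \emph{targets} of $\wt i_V\wt i_{H}(s)$ become units. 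In other words $\wt i_V \circ \wt i_{H}$ interchanges the core $C = C_R$ with the ``left-core'' $C_L$---indeed the paper uses it precisely as the identification $C_L \to C_R$---so it cannot serve as the core inverse $C \to C$. The correct recipe is to apply a \emph{single} double-groupoid inverse and then translate back into $C$ by a unit square: the core inverse of $s$ is $\wt i_V(s)\,\wt\circ_{H}\,\wt 1^V_{i_{H}(\wt\ell_V(s))}$, which is exactly the expression that surfaces later in the paper when it is shown that the reduction-induced $*$-structure reproduces Brown and Mackenzie's core inverse. This is a local, fixable error; the remainder of your outline (well-definedness, units, associativity, smoothness via the double source map condition) is sound.
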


The groupoid structure on the core comes from a combination of the two groupoid structures on $D$. Explicitly, the groupoid product of two elements $s$ and $s'$ in the core can be expressed as composing vertically and then horizontally (or equivalently  horizontally and then vertically) in the following diagram:
\begin{center}
\begin{tikzpicture}
	\draw (0,0) rectangle (1.5,1);
	\node at (.75,.5) {$s$};
	\draw (1.5,0) rectangle (3,1);
	\node at (2.25,.5) {$\wt 1^{H}_{\wt\ell_{H}(s')}$};
	\draw (0,1) rectangle (1.5,2);
	\node at (.75,1.5) {$\wt 1^V_{\wt\ell_V(s')}$};
	\draw (1.5,1) rectangle (3,2);
	\node at (2.25,1.5) {$s'$};
\end{tikzpicture}
\end{center}

The core groupoid of Example~\ref{ex:dmain} is the trivial groupoid $M \rightrightarrows M$ while that of Example~\ref{ex:dinertia} is $G \rightrightarrows M$ itself.

\begin{defn}
A \emph{symplectic double groupoid} is a double Lie groupoid $D$ where $D$ is equipped with a symplectic structure making the top and left groupoid structures in~(\ref{defn:dbl-grpd}) symplectic groupoids.
\end{defn}

The symplectic structure on $D$ endows the core with additional structure as follows:

\begin{thrm}[Mackenzie \cite{M}]
The core $C$ of a symplectic double groupoid $D$ is a symplectic submanifold of $D$ and the induced groupoid structure on $C \rightrightarrows M$ is that of a symplectic groupoid.
\end{thrm}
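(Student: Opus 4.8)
The plan is to realize the core as a clean intersection of two coisotropic submanifolds whose characteristic foliations are mutually transverse, and then to build the core groupoid structure by composing the two groupoid multiplications of $D$ as canonical relations. Writing $\alpha_{\mathrm{top}}=\wt r_H\colon D\to V$ and $\alpha_{\mathrm{left}}=\wt r_V\colon D\to H$ for the two source maps, the definition of the core reads
\[
C=\alpha_{\mathrm{top}}^{-1}\bigl(1^V(M)\bigr)\cap\alpha_{\mathrm{left}}^{-1}\bigl(1^H(M)\bigr).
\]
First I would show each factor is coisotropic. The base $V$ of the top symplectic groupoid carries a Poisson structure for which $V\rightrightarrows M$ is a Poisson groupoid, and the unit embedding $1^V(M)\hookrightarrow V$ is a coisotropic submanifold; since $\alpha_{\mathrm{top}}$ is a Poisson submersion, its preimage $A:=\alpha_{\mathrm{top}}^{-1}(1^V(M))$ is coisotropic in $D$, and likewise $B:=\alpha_{\mathrm{left}}^{-1}(1^H(M))$ is coisotropic. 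These preimages are of exactly the type whose reductions are computed in Example~\ref{ex:std-reds}.

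Next I would prove that $C=A\cap B$ is a symplectic submanifold. The characteristic distribution $TA^\omega$ is controlled by the top-source directions $\ker d\alpha_{\mathrm{top}}$ together with the $\alpha_{\mathrm{top}}$-lifts of the characteristic directions of $1^V(M)$ in $V$, and similarly for $TB^\omega$. Granting that the intersection is clean, so that $T(A\cap B)=TA\cap TB$ and hence $T(A\cap B)^\omega=TA^\omega+TB^\omega$, a short linear-algebra argument shows that $T(A\cap B)$ is symplectic as soon as $TA\cap TB^\omega=0$ and $TA^\omega\cap TB=0$; that is, as soon as the characteristic foliation of each factor is transverse to the other factor. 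This transversality is precisely where the full double structure enters: the standing hypothesis that the double source map $D\to H\times_M V$ is a submersion forces the two source foliations $\ker d\alpha_{\mathrm{top}}$ and $\ker d\alpha_{\mathrm{left}}$ to be transverse, which delivers the required vanishing. A dimension count (each preimage cuts down by the fibre dimension of $H$, equivalently of $V$, which coincide because both groupoid structures force $\dim D=2\dim V=2\dim H$) then gives $\dim C=2\dim M$, as expected for a symplectic groupoid over $M$.

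Finally I would produce the symplectic groupoid structure on $C\rightrightarrows M$. The core product of $s,s'$ is obtained by composing vertically in the top groupoid (via $\wt m_H$) and then horizontally in the left groupoid (via $\wt m_V$) in the displayed square, with the two empty corners filled by units. Since the top and left groupoids are symplectic, the graphs of $\wt m_H$ and $\wt m_V$ are canonical relations $D\times D\to D$, while the unit insertions and the inclusion $C\hookrightarrow D$ are canonical; the core multiplication $m_C\colon C\times C\to C$ is then the composition of these relations, and the inverse is assembled from $\wt i_H$ and $\wt i_V$. The substantive point is to check that these compositions are strongly transversal, so that $m_C$ is again a canonical relation, i.e.\ a Lagrangian submanifold of $\bar{C}\times\bar{C}\times C$; the associativity, unit, and $*$-compatibility identities descend from the corresponding identities on $D$ together with the interchange law. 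One then concludes either by verifying the classical symplectic groupoid axioms directly---the double units $M\hookrightarrow C$ form the Lagrangian unit section and $\dim C=2\dim M$---or, more in the spirit of this paper, by checking that $(C,m_C,e_C)$ with the induced inverse is a strongly positive symplectic $*$-monoid and invoking Zakrzewski's theorem. I expect the main obstacle to be the transversality bookkeeping common to the last two paragraphs: the cleanness of $A\cap B$ and the transversality of the characteristic foliations, and equivalently the strong transversality of the relation compositions, all rest on the same description of how the two source foliations of $D$ interact---precisely the content of Lemma~\ref{lem:fol}---so it is that lemma, rather than any formal manipulation, that carries the real weight.
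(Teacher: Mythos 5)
The theorem you are proving is quoted in the paper from~\cite{M}; the paper's own derivation of it is the reduction construction of Section~\ref{sec:core-red}, and your outline runs parallel to that construction, but two of its load-bearing steps fail as stated. The step that breaks outright is your assembly of the core multiplication: you treat ``the unit insertions and the inclusion $C\hookrightarrow D$'' as canonical relations, and they are not. The graph of the inclusion of the core is a $\dim C$-dimensional submanifold of $\bar{C}\times D$, whereas a lagrangian submanifold there must have dimension $\tfrac{1}{2}(\dim C+\dim D)$; since $\omega_C=i^*\omega_D$ the graph is isotropic, but it is lagrangian only when $C$ is open in $D$. The pointwise unit insertion $s'\mapsto \wt 1^V_{\wt\ell_V(s')}$ fails for the same dimensional reason. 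Consequently the composite you propose for $m_C$ is not a composition of canonical relations at all, and no amount of strong-transversality bookkeeping can repair it. This is precisely the gap the paper's reduction machinery exists to fill: the morphism $C\to D$ one must use is not the inclusion but the coreduction $\Lambda^t$ of~(\ref{redT}), which sends a core point to the \emph{entire} characteristic leaf of $X=\wt r_H^{-1}(1^V M)$ through it, and which is lagrangian exactly because $X$ is coisotropic. The paper then takes $m=\Lambda\circ\wt m_V\circ(\Lambda^t\times\Lambda^t)$ --- note that only the left product $\wt m_V$ is needed, the top product entering only when one recognizes the outcome --- and computes directly that this reproduces Brown--Mackenzie's product, unit~(\ref{core-unit}) and inverse~(\ref{core-star}), with the strongly positive $*$-monoid axioms descending from those of $(D,\wt m_V,\wt i_V)$ via identities such as $\Lambda\circ\wt i_V=s\circ\Lambda$; Zakrzewski's theorem then finishes, as you anticipated.

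Your symplecticity argument has a correct linear-algebra core but an incorrect mechanism, and the distinction matters. The double source condition does give you cleanness, $T_sC=T_sX\cap T_sY$, but it does not give the transversality you need, for two reasons. First, the two source foliations are simply not transverse: a dimension count using the double source submersion shows $\ker d\wt r_H+\ker d\wt r_V$ has codimension $\dim M$ in $TD$. Second, and more fundamentally, the characteristic distribution of $X$ is not $\ker d\wt r_H|_X$, nor is it controlled by the source foliation in any soft way: by Lemma~\ref{lem:fol} the leaf through $s$ is $\{s\,\wt\circ_H\,\wt 1^V_\lambda\}$ and $\wt r_H(s\,\wt\circ_H\,\wt 1^V_\lambda)=1^V_{r_H(\lambda)}$, so the leaves are \emph{transverse} to the $\wt r_H$-fibers rather than contained in them; the remark following that lemma makes exactly this point. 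Your closing sentence places the weight correctly, and granting Lemma~\ref{lem:fol} your route does close: for $s\in C$ one has $\wt r_V(s\,\wt\circ_H\,\wt 1^V_\lambda)=\wt r_V(s)\circ_H\lambda=\lambda$ because $\wt r_V(s)$ is a unit, so $d\wt r_V$ carries $T_s(X^\perp_s)$ isomorphically onto $\ker(d\ell_H)_{1^H_m}$, which meets $T(1^H M)$ trivially; hence $(T_sX)^\perp\cap T_sY=0$, symmetrically $(T_sY)^\perp\cap T_sX=0$, and your clean-intersection computation yields that $T_sC$ is symplectic. That argument, once repaired this way, is genuinely different from the paper's: the paper instead observes that each leaf of $X^\perp$ meets $C$ exactly once, so that $C$ is a cross-section identifying $C\cong X/X^\perp$, and then proves the reduced form coincides with $i^*\omega$, so nondegeneracy comes from reduction rather than from pointwise linear algebra. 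What your variant loses is the reduction relation $\Lambda$ itself --- which is exactly the object needed to fix the first gap.
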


\begin{ex}\label{ex:main}
For any groupoid $G \rightrightarrows M$, there is a symplectic double groupoid structure on $T^*G$ of the form
\begin{center}
\begin{tikzpicture}[>=angle 90]
	\node (LL) at (0,0) {$A^*$};
	\node (LR) at (2,0) {$M$};
	\node (UL) at (0,2) {$T^*G$};
	\node (UR) at (2,2) {$G$};

	\tikzset{font=\scriptsize};
	\draw[->] (.4,.07) -- (1.6,.07);
	\draw[->] (.4,-.07) -- (1.6,-.07);
	
	\draw[->] (-.07,1.6) -- (-.07,.4);
	\draw[->] (.07,1.6) -- (.07,.4);
	
	\draw[->] (.6,2.07) -- (1.7,2.07);
	\draw[->] (.6,2-.07) -- (1.7,2-.07);
	
	\draw[->] (2-.07,1.6) -- (2-.07,.4);
	\draw[->] (2+.07,1.6) -- (2+.07,.4);
\end{tikzpicture}
\end{center}
Here, $A$ is the Lie algebroid of $G \rightrightarrows M$, the right groupoid structure is the given one on $G$, the top and bottom are the natural groupoid structures on vector bundles given by fiber-wise addition, and the left structure is the induced symplectic groupoid structure on the cotangent bundle of a Lie groupoid. The core of this symplectic double groupoid is symplectomorphic to $T^*M$, and the core groupoid is simply $T^*M \rightrightarrows M$.
\end{ex}

\begin{ex}\label{ex:inertia}
Again for any groupoid $G \rightrightarrows M$, there is a symplectic double groupoid structure on $\overline{T^*G} \times T^*G$ of the form
\begin{center}
\begin{tikzpicture}[>=angle 90]
	\node (LL) at (0,0) {$\overline{A^*} \times A^*$};
	\node (LR) at (3,0) {$A^*$};
	\node (UL) at (0,2) {$\overline{T^*G} \times T^*G$};
	\node (UR) at (3,2) {$T^*G$};

	\tikzset{font=\scriptsize};
	\draw[->] (.9,.07) -- (2.6,.07);
	\draw[->] (.9,-.07) -- (2.6,-.07);
	
	\draw[->] (-.07,1.6) -- (-.07,.4);
	\draw[->] (.07,1.6) -- (.07,.4);
	
	\draw[->] (1.2,2.07) -- (2.5,2.07);
	\draw[->] (1.2,2-.07) -- (2.5,2-.07);
	
	\draw[->] (3-.07,1.6) -- (3-.07,.4);
	\draw[->] (3.07,1.6) -- (3.07,.4);
\end{tikzpicture}
\end{center}
Here, the right side is the induced symplectic groupoid structure on $T^*G$, the top and bottom are pair groupoids, and the left is a product groupoid. The core is symplectomorphic to $T^*G$, and the core groupoid is $T^*G \rightrightarrows A^*$.
\end{ex}

Both of the above examples are special cases of the following result due to Mackenzie:

\begin{thrm}[Mackenzie \cite{M}]\label{thrm:ctdbl}
Let $D$ be a double Lie groupoid. Then the cotangent bundle $T^*D$ has a natural symplectic double groupoid structure
\begin{center}
\begin{tikzpicture}[>=angle 90]
	\node (LL) at (0,0) {$A_H^*D$};
	\node (LR) at (3,0) {$A^*C$};
	\node (UL) at (0,2) {$T^*D$};
	\node (UR) at (3,2) {$A_V^*D$};

	\tikzset{font=\scriptsize};
	\draw[->] (.5,.07) -- (2.5,.07);
	\draw[->] (.5,-.07) -- (2.5,-.07);
	
	\draw[->] (-.07,1.6) -- (-.07,.4);
	\draw[->] (.07,1.6) -- (.07,.4);
	
	\draw[->] (.6,2.07) -- (2.5,2.07);
	\draw[->] (.6,2-.07) -- (2.5,2-.07);
	
	\draw[->] (3-.07,1.6) -- (3-.07,.4);
	\draw[->] (3+.07,1.6) -- (3+.07,.4);
\end{tikzpicture}
\end{center}
where $A_V^*D$ and $A_H^*D$ are the duals of the Lie algebroids of $D \rightrightarrows V$ and $D \rightrightarrows H$ respectively, and $A^*C$ is the dual of the Lie algebroid of the core groupoid $C \rightrightarrows M$. The core of this symplectic double groupoid is symplectomorphic to $T^*C$.
\end{thrm}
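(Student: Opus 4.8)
The plan is to construct the two symplectic groupoid structures on $T^*D$ by cotangent-lifting the two Lie groupoid structures on $D$ separately, and then to show that the double groupoid axioms for $D$ force these lifts to be compatible. The building block is the statement that for a single Lie groupoid $G \rightrightarrows M$ with Lie algebroid $A$, the cotangent bundle $T^*G$ is a symplectic groupoid over $A^*$ whose structure maps are cotangent lifts. I would derive this from Zakrzewski's characterization of symplectic groupoids as strongly positive symplectic $*$-monoids: the maps $m$, $1$, $i$ of $G$ make it a groupoid object in $\Man$, and since $T^*\colon \Man \to \Symp$ is a product-preserving functor, the multiplication relation (the cotangent lift of groupoid multiplication), together with the Lagrangian $A^* \hookrightarrow T^*G$ as unit and the lift $T^*i$ as $*$-structure, satisfies the monoid and $*$-monoid axioms. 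Zakrzewski's theorem then yields the symplectic groupoid $T^*G \rightrightarrows A^*$, with source and target the cotangent lifts of those of $G$.

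Applying this to both legs of $D$, from $D \rightrightarrows V$ I obtain the top structure $T^*D \rightrightarrows A_V^*D$ and from $D \rightrightarrows H$ the left structure $T^*D \rightrightarrows A_H^*D$, both living on the same symplectic manifold $T^*D$. The double groupoid axioms assert precisely that the structure maps of $D \rightrightarrows V$ are homomorphisms for $D \rightrightarrows H$ and vice versa, i.e. a family of commuting diagrams built from the structure maps of $D$. I would apply the functor $T^*$ to each such diagram; the key technical point is that $T^*$ respects the compositions and fiber products involved (it sends the composable-pair manifold $G \times_M G$ and the multiplication to their counterparts for $T^*G \rightrightarrows A^*$), so each lifted diagram commutes in $\Symp$. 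This shows the top and left symplectic groupoid structures on $T^*D$ are compatible, so $T^*D$ is a symplectic double groupoid.

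Next I would supply the groupoid structures on the two bases and identify their common base. The bases $A_V^*D$ and $A_H^*D$ are the unit submanifolds of the top and left structures; restricting the \emph{opposite} structure to each unit submanifold produces the remaining two sides of the square. Concretely, restricting the left symplectic groupoid $T^*D \rightrightarrows A_H^*D$ to the unit submanifold $A_V^*D$ of the top structure yields, by the compatibility just established, a groupoid $A_V^*D \rightrightarrows B$ over $B := A_V^*D \cap A_H^*D \subset T^*D$, and symmetrically $A_H^*D \rightrightarrows B$. It then remains to identify $B \cong A^*C$. This I would carry out through Mackenzie's duality between the LA-groupoid obtained by differentiating one leg of $D$ and the dual VB-groupoid: the set $B$ of covectors that are simultaneously units for both structures is supported over the core $C \hookrightarrow D$ and is canonically the dual $A^*C$ of the core algebroid.

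Finally, the core of the symplectic double groupoid $T^*D$ consists of those covectors whose sources in both the top and left structures are units of $B$. To identify this with $T^*C$ I would invoke the standard reduction $T^*D|_C \red T^*C$ of Example~\ref{ex:std-reds} and show that the double-source-unit conditions cut out exactly the covectors descending under this reduction, with matching symplectic forms; equivalently, that the core construction commutes with the cotangent lift. I expect steps three and four to be the main obstacle. They are not formal consequences of functoriality, because $T^*$ alters bases (sending $M$ to $A^*$) and hence does \emph{not} carry the base groupoid $V \rightrightarrows M$ of $D$ to the base groupoid of $T^*D$; instead one must track the linear duality between the core algebroid $AC$ and the side algebroids $A_V D$, $A_H D$. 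Making the identifications $B \cong A^*C$ and $\mathrm{core}(T^*D) \cong T^*C$ rigorous is the technical heart of the theorem, and the point where the VB-/LA-groupoid duality does the real work.
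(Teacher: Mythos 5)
A point of order before anything else: the paper does not prove this theorem. It is stated as background in Section~\ref{sec:dbl-grpds}, attributed to Mackenzie~\cite{M}, and used only to frame Examples~\ref{ex:main} and~\ref{ex:inertia} as special cases; the paper's own results begin with the reduction construction in Section~\ref{sec:core-red}. So your proposal cannot be compared against anything in the paper---only against Mackenzie's construction in~\cite{M}. Judged that way, your architecture is the standard one: the top and left structures on $T^*D$ are indeed the cotangent (Coste--Dazord--Weinstein) symplectic groupoids of the two legs $D \rightrightarrows V$ and $D \rightrightarrows H$, and the side groupoids $A_V^*D \rightrightarrows A^*C$ and $A_H^*D \rightrightarrows A^*C$, as well as the identification of the core with $T^*C$, do come from Mackenzie's duality theory of VB-/LA-groupoids.

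As a proof, however, the proposal has genuine gaps. First, the functoriality argument does not work as stated: a groupoid object in $\Man$ has its multiplication defined on the fiber product $G \times_M G$, which is not a product in $\Man$, so a product-preserving functor $T^*$ does not convert it into a monoid object in the symplectic category. What must be lifted is the multiplication viewed as a smooth \emph{relation} $G \times G \to G$, and the paper defines cotangent lifts only for maps; lifts of relations compose only under transversality hypotheses, so the associativity, $*$-monoid, and strong positivity axioms, together with strong transversality of every composition, have to be checked by hand. The same difficulty, compounded, undercuts your compatibility step: the double groupoid axioms are diagrams over the bases $V$, $H$, $M$ built from fiber products, and---as you concede at the end, but only for steps three and four---$T^*$ sends those bases to the wrong objects ($T^*V$, $T^*H$, $T^*M$ rather than $A_V^*D$, $A_H^*D$, $A^*C$), so ``apply $T^*$ to each diagram'' cannot produce the homomorphism conditions that involve the bases; your own closing caveat thus applies to step two as well. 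Second, steps three and four are deferred rather than carried out: constructing the groupoid operations on $A_V^*D$ over $A^*C$, proving that the unit submanifold $A_V^*D$ of the top structure is closed under the left structure's operations---which does not follow from the total-space compatibility of step two---and identifying the core with $T^*C$ constitute the technical heart of Mackenzie's proof, and invoking ``VB-/LA-groupoid duality'' without developing it leaves an outline, not a proof. Finally, a small but real error: the source and target of $T^*G \rightrightarrows A^*$ are not the cotangent lifts of the source and target of $G$; those lifts are canonical relations $T^*G \to T^*M$, and the first example of Section~\ref{sec:core-red} shows that $T^*\ell$ and $T^*r$ arise instead as the reduction relations $T^*G \red T^*M$ onto the core, not as the groupoid's source and target maps.
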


Example \ref{ex:main} arises from applying this theorem to the double groupoid of Example~\ref{ex:dmain}
and Example \ref{ex:inertia} arises from the double groupoid of Example~\ref{ex:dinertia}.

\section{Realizing the Core via Reduction}\label{sec:core-red}
We now describe the procedure for producing the core of a symplectic double groupoid, and indeed the symplectic groupoid structure on the core, via symplectic reduction. 

Let $D$ be a symplectic double groupoid. The unit submanifold $1^V M$ of the groupoid structure on $V$ is coisotropic in $V$ for the Poisson structure induced by the symplectic groupoid $D \rightrightarrows V$. Hence its preimage
\[
X := \widetilde{r}_{H}^{-1}(1^{V} M) \subseteq D
\]
under the source of the top groupoid (which is a Poisson map) is a coisotropic submanifold of $D$. In square notation $X$ consists of those elements of the form:
\begin{center}
\begin{tikzpicture}[scale=.75]
	\node at (1,1) {$s$};

	\tikzset{font=\scriptsize};
	\draw (0,0) rectangle (2,2);
	\node at (-.6,1) {$\wt\ell_V(s)$};
	\node at (1,2.4) {$1^V_m$};
	\node at (2.6,1) {$\wt r_V(s)$};
	\node at (1,-.4) {$\wt\ell_{H}(s)$};
\end{tikzpicture}
\end{center}
Note that the core $C$ of $D$ sits inside of $X$. Similarly, by doing the same with the transpose of $D$ we produce the coisotropic submanifold
\[
Y := \wt r_V^{-1}(1^{H}M) \subseteq D
\]
of $D$, which also contains the core.

\begin{ex}
Consider the symplectic double groupoid of Example \autoref{ex:main}. The submanifold $X$ in this case is the restricted cotangent bundle $T^*G|_M \subseteq T^*G$. As described in Example~\ref{ex:std-reds}, the reduction of this coisotropic by its characteristic foliation is the core $T^*M$, and the resulting reduction relation
\[
T^*G \red T^*M
\] 
is the transpose of the cotangent lift $T^*e$ of the unit embedding $e: M \to G$ of the groupoid $G \rightrightarrows M$.

Now, performing this procedure using the left groupoid is more interesting. Recall that the source map $\wt r$ of the groupoid $T^*G \rightrightarrows A^*$ is determined by the requirement that
\[
\wt r(g,\xi)\big|_{\ker d\ell_{e(r(g))}} = (dL_g)_{e(r(g))}^*\left(\xi\big|_{\ker d\ell_g}\right)
\]
where $L_g: \ell^{-1}(r(g)) \to \ell^{-1}(\ell(g))$ is left groupoid multiplication by $g$ and we identify $A^* \cong N^*M$. Here $\ell$ and $r$ are the target and source maps of $G \rightrightarrows M$ respectively. Thus, we see that $(g,\xi)$ maps to a unit $(r(g),0) \in A^*$ of the bottom groupoid (note that the unit of the bottom groupoid is given by the zero section $0_M$ of $A^*$) if and only if $\xi|_{\ker d\ell_g} = 0$, and hence equivalently if and only if $\xi$ is in the image of $d\ell_g^*$. 

Therefore, the coisotropic submanifold $Y := \wt r^{-1}(0_M)$ of $T^*G$ is equal to $N^*\F_\ell$, where $\F_\ell$ is the foliation of $G$ given by the $\ell$-fibers of the groupoid $G \rightrightarrows M$. According to Example~\ref{ex:std-reds}, the reduction of this is also the core $T^*M$, and the reduction relation $T^*G \red T^*M$ turns out to be the cotangent lift $T^*\ell$. 
\end{ex}

These results generalize in the following way to an arbitrary symplectic double groupoid. We first have the following explicit description of the characteristic foliation $X^\perp$ of $X := \widetilde{r}_{H}^{-1}(1^V M)$:

\begin{lem}\label{lem:fol}
The leaf of the foliation $X^\perp$ containing $s \in X$ is given by
\begin{equation}\label{char-fol}
X^\perp_s := \{ s\ \wt\circ_{H}\,\wt 1^V_\lambda\ |\ \lambda \in \ell_{H}^{-1}(m)\},
\end{equation}
where $m = r_V(\wt r_{H}(s))$.
\end{lem}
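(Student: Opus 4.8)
The plan is to verify directly that, for each $s \in X$, the set $X^\perp_s$ defined by~(\ref{char-fol}) is an embedded integral submanifold of the characteristic distribution---i.e. the $\omega$-orthogonal complement $(T_sX)^\omega \subseteq T_sX$---through $s$, of the correct dimension, and then that these sets are mutually consistent, so that they are exactly the leaves of $X^\perp$. Granting, as in the text, that $X$ is coisotropic, the single fact that drives everything is that the unit submanifold $U := \wt{1}^V(H)$ of the \emph{left} symplectic groupoid $D \rightrightarrows H$ is \emph{Lagrangian} in $D$. I will write $\wt{1}_m := \wt{1}^V_{1^H_m}$ for the double unit over $m \in M$; note that $\wt{1}_m \in U$ and that right multiplication by $\wt{1}_m$ in the top groupoid fixes any $s$ with $\wt{r}_H(s)=1^V_m$.

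First I would record that $X^\perp_s \subseteq X$ with the right dimension. Since $\wt{r}_H$ is the source of the top groupoid, $\wt{r}_H(s \,\wt{\circ}_H\, \wt{1}^V_\lambda) = \wt{r}_H(\wt{1}^V_\lambda) = 1^V_{r_H(\lambda)} \in 1^V M$, so $X^\perp_s \subseteq X$, and the composite is defined exactly when $\ell_H(\lambda) = m$. As right translation by $s$ in the top groupoid is injective and $\wt{1}^V$ is an embedding, $\lambda \mapsto s \,\wt{\circ}_H\, \wt{1}^V_\lambda$ is an injective immersion, whence $X^\perp_s$ is a submanifold of dimension $\dim \ell_H^{-1}(m) = \dim H - \dim M$. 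Because $D \rightrightarrows V$ and $D \rightrightarrows H$ are both symplectic groupoids, $\dim D = 2\dim V = 2\dim H$, so $\dim V = \dim H$ and this equals $\dim D - \dim X = \dim V - \dim M = \dim (T_sX)^\omega$.

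The heart of the matter is to show $T_s X^\perp_s \subseteq (T_s X)^\omega$; equality then follows from the dimension count. A vector tangent to $X^\perp_s$ at $s = s \,\wt{\circ}_H\, \wt{1}_m$ has the form $\xi = d\wt{m}_H(0_s,\zeta)$, where $\zeta = \frac{d}{dt}\big|_0\wt{1}^V_{\lambda(t)}$ for a curve in $\ell_H^{-1}(m)$ through $1^H_m$; in particular $\zeta \in T_{\wt{1}_m} U$. Given an arbitrary $\eta \in T_s X$, I would choose a curve $s(u) \in X$ realizing it and use $s(u) = s(u)\,\wt{\circ}_H\, \wt{1}_{m(u)}$, where $m(u)$ is the double base point of $s(u)$, to write $\eta = d\wt{m}_H(\eta,\eta_2)$ with $\eta_2 = \frac{d}{du}\big|_0 \wt{1}_{m(u)} \in T_{\wt{1}_m} U$. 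Both pairs are composable over $TV$, so multiplicativity of $\omega$ for the top groupoid ($\wt{m}_H^*\omega = \mathrm{pr}_1^*\omega + \mathrm{pr}_2^*\omega$) yields
\[
\omega_s(\xi,\eta) = \omega_s(0_s,\eta) + \omega_{\wt{1}_m}(\zeta,\eta_2) = \omega_{\wt{1}_m}(\zeta,\eta_2) = 0,
\]
the last equality because $\zeta$ and $\eta_2$ both lie in $T_{\wt{1}_m} U$ and $U$ is Lagrangian. Thus $T_s X^\perp_s = (T_s X)^\omega$.

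Finally I would check consistency, namely $X^\perp_{s'} = X^\perp_s$ for $s' = s\,\wt{\circ}_H\, \wt{1}^V_{\lambda_0} \in X^\perp_s$, so that the sets $X^\perp_s$ genuinely partition $X$. Since $\wt{1}^V$ is a homomorphism from $H \rightrightarrows M$ to $D \rightrightarrows V$, the top-groupoid composite $\wt{1}^V_{\lambda_0} \,\wt{\circ}_H\, \wt{1}^V_\mu$ is again a left unit $\wt{1}^V_\nu$, and as $\mu$ runs over the relevant source fibre, $\nu = \lambda_0 \circ_H \mu$ runs over all of $\ell_H^{-1}(m)$ by translation invariance; hence $X^\perp_{s'} = X^\perp_s$. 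Combined with the previous step applied at each point of the leaf, this shows the $X^\perp_s$ are precisely the leaves of $X^\perp$. The main obstacle is the computation in the third paragraph: one must set up the two factorizations through $\wt{m}_H$ and verify composability so that multiplicativity applies, and then recognize the surviving pairing as living inside the Lagrangian $U$. This is exactly the point where both groupoid structures on $D$ enter at once---multiplicativity comes from the top groupoid, the Lagrangian unit submanifold from the left---which matches the remark that the lemma reflects the entire double groupoid structure.
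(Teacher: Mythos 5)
Your proof is correct, but it takes a genuinely different route from the paper's. The paper also begins with the dimension count and the reduction to the containment $T_s(X^\perp_s) \subseteq (T_sX)^\perp$, but then argues in two stages: first it treats the special case where $s$ is a unit of the top groupoid, using the splitting $TD|_V = TV \oplus \ker d\wt r_H|_V$, the symplectic orthogonality of source and target fibers, the compatibility $\wt 1^H \circ 1^V = \wt 1^V \circ 1^H$, and the fact that $\wt 1^V: H \to D$ is lagrangian; it then transports this special case to arbitrary $s$ by choosing a local \emph{lagrangian bisection} $B$ of the top groupoid through $s$ and invoking Xu's theorem that left translation $L_B$ is a symplectomorphism carrying $X^\perp_p$ to $X^\perp_s$ and $(T_pX)^\perp$ to $(T_sX)^\perp$. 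You instead work directly at an arbitrary $s$: you factor both a leaf vector $\xi = d\wt m_H(0_s,\zeta)$ and an arbitrary $\eta = d\wt m_H(\eta,\eta_2) \in T_sX$ through the differential of the top multiplication, with second components tangent to the lagrangian unit section $\wt 1^V(H)$, and then apply multiplicativity $\wt m_H^*\omega = \mathrm{pr}_1^*\omega + \mathrm{pr}_2^*\omega$ to conclude $\omega_s(\xi,\eta) = \omega_{\wt 1_m}(\zeta,\eta_2) = 0$. Your composability checks (both pairs are tangent to the space of composable pairs, verified by explicit curves) are exactly what is needed for multiplicativity to apply, so the argument is sound. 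What your approach buys: it is uniform in $s$ and more self-contained, using only the defining multiplicativity of the symplectic form and the lagrangian unit embedding, with no appeal to the existence of local lagrangian bisections or to Xu's symplectomorphism theorem; your final consistency step (that the sets $X^\perp_s$ partition $X$, via the homomorphism property of $\wt 1^V$) is also slightly more careful than the paper, which stops at the tangent-space identity. What the paper's approach buys is mainly the isolation of the computation at units, where the splitting along $V$ is available, together with a technique (translation by lagrangian bisections) that is useful elsewhere in the theory. Both proofs hinge on the same double-groupoid interplay you identify: multiplicativity from the top groupoid, the lagrangian unit submanifold from the left one. One trivial slip: the map $\lambda \mapsto s\,\wt\circ_H\,\wt 1^V_\lambda$ is \emph{left} translation by $s$ in the top groupoid, not right translation; this does not affect the argument.
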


\begin{proof}
To show that the characteristic foliation is as claimed, we must show that
\[
T_s (X^\perp_s) = (T_s X)^\perp
\]
where $(T_s X)^\perp$ is the symplectic orthogonal of $T_s X$ in $T_sD$. A dimension count shows that these two spaces have the same dimension, so we need only show that the former is contained in the latter. Thus we must show that if $Y \in T_s(X^\perp_s)$, then
\[
\omega_s(Y,V) = 0
\]
for any $Z \in T_s X$ where $\omega$ is the symplectic form on $D$. Since
\[
T_s X = (d\wt r_{H})_s^{-1}\left(T_{1^V_m}(1^V M)\right),
\]
this means that $Z$ satisfies $(d\wt r_{H})_s Z \in T_{1^V_m}(1^V M)$.

We use the following explicit description of $T_s(X^\perp_s)$. The elements of $X_s^\perp$ are parametrized by $\ell_{H}^{-1}(m)$, so we have that $X^\perp_s$ is the image of the map
\[
\ell_{H}^{-1}(m) \to D
\]
given by the composition
\[
\lambda \mapsto \wt 1^V_\lambda \mapsto \wt L_s^{H} (\wt 1^V_\lambda)
\]
where $\wt L_s^{H}$ is left-multiplication by $s$ in the top groupoid. Taking differentials at $\lambda = 1^{H}_m$ then gives the explicit description of $T_s(X^\perp_s)$ we want; in particular, we can write $Y \in T_s(X^\perp_s)$ as
\begin{displaymath}
Y = (d \wt L_s^{H})_{\wt 1^V(1^{H}_m)}(d \wt 1^V)_{1^{H}_m} W
\end{displaymath}
for some $W \in \ker (d\ell_{H})_{1^{H}_m}$.

First we consider the case where $s$ is a unit of the top groupoid structure, so suppose that $s = \wt 1^{H}(1^V_m)$. In this case $\wt L_s^H$ is simply the identity, so the expression for $Y$ above becomes
\[
Y = (d \wt 1^V)_{1^{H}_m} W.
\]
Using the splitting
\[
TD|_V = TV \oplus \ker d\wt r_{H}|_V
\]
we can write $Z \in T_sX$ as
\[
Z = (d\wt 1^{H})_{1^V_m}(d\wt r_{H})_s Z + [Z- (d\wt 1^{H})_{1^V_m}(d\wt r_{H})_s Z],
\]
where the first term is tangent to the units of the top groupoid and the second term is in $\ker (d\wt r_{H})_s$, and so tangent to the $\wt r_H$-fiber through $s$. Then we have
\[
\omega_s (Y,Z) = \omega_s ((d \wt 1^V)_{1^{H}_m} W,(d\wt 1^{H})_{1^V_m}(d\wt r_{H})_s Z) +  \omega_s ((d \wt 1^V)_{1^{H}_m} W,[V - (d\wt 1^{H})_{1^V_m}(d\wt r_{H})_s Z]).
\]
Since $W \in \ker(d\ell_{H})_{1^{H}_m}$, one can check that $Y = (d \wt 1^V)_{1^{H}_m} W$ is tangent to the $\wt \ell_H$-fiber through $s$, so the second term above vanishes since source and target fibers of a symplectic groupoid are symplectically orthogonal to each other. By the defining property of $Z$, we have $(d\wt r_{H})_s Z = (d 1^V)_m z$ for some $z \in T_m M$. Using this and the fact that $\wt 1^{H} \circ 1^V = \wt 1^V \circ 1^{H}$ (which follows from the double groupoid compatibilities), we can write the first term above as
\[
\omega_s ((d \wt 1^V)_{1^{H}_m} W,(d \wt 1^V)_{1^{H}_m}(d 1^{H})_m z),
\]
which vanishes since the embedding $\wt 1^V: H \to D$ is lagrangian. Thus $\omega_s(Y,Z) = 0$ as was to be shown. 

Now, for the general case, given $s \in X$ set $p := \wt 1_H(1^V_m)$ and choose a local lagrangian bisection $B$ of the top groupoid $D \rightrightarrows V$ containing $s$. Recall that this is a lagrangian submanifold of $D$ such that the restrictions
\[
\wt\ell_H|_B: B \to U \text{ and } \wt r_H|_B: B \to U'
\]
are diffeomorphisms of $B$ onto open subsets $U$ and $U'$ of $V$. This then defines a left-multiplication $L_B: \wt\ell_H^{-1}(U') \to \wt\ell_H^{-1}(U)$ between the open submanifolds $\wt\ell_H^{-1}(U')$ and $\wt\ell_H^{-1}(U)$ of $D$ by
\[
L_B(d) = (\wt r_H|_B)^{-1}(\wt\ell_H(d))\, \wt\circ_H\, d,
\]
which sends the leaf $X^\perp_p$ of~(\ref{char-fol}) to $X^\perp_s$. It is known that $L_B$ is actually a symplectomorphism~\cite{X} and so sends the symplectic orthogonal $(T_p X)^\perp$ at $p$ to $(T_s X)^\perp$. Since $(T_{p} X)^\perp = T_{p}(X_{p}^\perp)$ by what we showed previously and $T_{p}(X_{p}^\perp)$ is sent to $T_s(X^\perp_s)$, we have our result.
\end{proof}

\begin{rmk}
To emphasize, it is the entire double groupoid structure that makes this explicit description possible. As a contrast, given a symplectic realization $f: S \to P$ of a Poisson manifold $P$ and a coisotropic submanifold $N \subseteq P$, the characteristic foliation on $f^{-1}(N) \subseteq S$ is not easily described.
\end{rmk}

In square notation, the leaf of the characteristic foliation through $s$ consists of elements of the form
\begin{center}
\begin{tikzpicture}[scale=.85]
	\node at (1,1) {$s\,\,\wt\circ_{H}\wt 1^V_\lambda$};

	\tikzset{font=\scriptsize};
	\draw (0,0) rectangle (2,2);
	\node at (-1.2,1) {$\wt\ell_V(s) \circ_{H} \lambda$};
	\node at (1,2.4) {$1^V_{r_{H}(\lambda)}$};
	\node at (3.2,1) {$\wt r_V(s) \circ_{H} \lambda$};
	\node at (1,-.4) {$\wt\ell_{H}(s)$};
\end{tikzpicture}
\end{center}
where $\lambda \in \ell_{H}^{-1}(m)$. Switching the roles of the top and left groupoids, we get that the leaves of the characteristic foliation of $Y := \wt r_V^{-1}(1^{H}M)$ are given by
\[
Y^\perp_s := \{s\ \wt\circ_V\,\wt 1^{H}_{\lambda}\ |\ \lambda \in \ell_V^{-1}(m)\},
\]
where $m = r_{H}(\wt r_V(s))$

Returning to the characteristic leaves of $X$, note that such a leaf intersects the core in exactly one point, since there is only one choice of $\lambda$ which will make $\wt r_V(s) \circ_{H} \lambda$ a unit for the left groupoid structure, namely $\lambda = i_{H}(\wt r_V(s))$. Thus, the core forms a cross section to the characteristic foliation of $X$ and we conclude that the leaf space $X/X^\perp$ of this characteristic foliation can be identified with the core. This leaf space is then naturally symplectic, and we have:

\begin{prop}
The symplectic structure on the core obtained via the reduction above agrees with the one $C$ inherits as a symplectic submanifold of $D$.
\end{prop}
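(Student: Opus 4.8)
The plan is to deduce the statement from the defining property of the reduced symplectic form together with the cross-section identification $C \cong X/X^\perp$ established just before the proposition. Write $\omega$ for the symplectic form on $D$, let $\iota_X \colon X \hookrightarrow D$ be the inclusion of the coisotropic, let $j \colon C \hookrightarrow X$ be the inclusion of the core into $X$ (recall $C \subseteq X$), and set $\iota_C := \iota_X \circ j \colon C \hookrightarrow D$. Finally let $\pi \colon X \to X/X^\perp$ be the quotient by the characteristic foliation. By the construction of symplectic reduction, the reduced form $\omega_{\mathrm{red}}$ on $X/X^\perp$ is characterized by $\pi^* \omega_{\mathrm{red}} = \iota_X^* \omega$: the pullback $\iota_X^*\omega$ is a closed $2$-form on $X$ whose kernel at each point is exactly $TX^\perp$, and $\omega_{\mathrm{red}}$ is the unique form descending along $\pi$.

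The identification of the leaf space with the core is realized by the composite $\pi \circ j \colon C \to X/X^\perp$, and this is a diffeomorphism precisely because, as shown in the paragraph preceding the statement, $C$ meets each leaf of $X^\perp$ in exactly one point; a dimension count (or the transversality of $C$ to the foliation) shows it is also a submersion onto the leaf space. Consequently, to prove that the reduced symplectic structure agrees with the one $C$ inherits as a symplectic submanifold, it suffices to pull $\omega_{\mathrm{red}}$ back along this diffeomorphism and check that the result equals $\iota_C^* \omega$, the latter being by definition the form induced on $C \subseteq D$.

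This final step is purely formal. Using functoriality of pullback together with the defining property $\pi^* \omega_{\mathrm{red}} = \iota_X^* \omega$, one computes
\[
(\pi \circ j)^* \omega_{\mathrm{red}} = j^* \pi^* \omega_{\mathrm{red}} = j^* \iota_X^* \omega = (\iota_X \circ j)^* \omega = \iota_C^* \omega,
\]
so the two structures coincide under the identification. I do not expect a genuine obstacle here; the only points requiring care are the bookkeeping of which inclusion is intended ($C \hookrightarrow X$ versus $C \hookrightarrow D$) and the observation that the reduction diffeomorphism $C \cong X/X^\perp$ is nothing but $\pi$ restricted to the cross-section $C$, both of which are immediate from the cross-section property already verified. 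If a more concrete, pointwise verification is preferred, one may instead fix $s \in C$, use transversality to split $T_s X = T_s C \oplus T_s(X_s^\perp)$, note that $T_s(X_s^\perp)$ is the radical of $\iota_X^*\omega$ on $T_s X$ so that $\iota_X^*\omega$ restricts nondegenerately to the complement $T_s C$, and observe that $d\pi_s$ carries this restricted form to $\omega_{\mathrm{red}}$ at $\pi(s)$; since the restriction of $\iota_X^*\omega$ to $T_sC$ is exactly $\iota_C^*\omega$, this again yields the claimed agreement.
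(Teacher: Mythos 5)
Your proposal is correct and is essentially the paper's own proof: both rest on the characterization $\pi^*\omega_{\mathrm{red}} = \iota_X^*\omega$ of the reduced form, the fact that $C$ is a cross-section of the characteristic foliation, and the same three-line functoriality-of-pullback computation. The only cosmetic difference is that the paper identifies the leaf space with $C$ from the start (so the identification becomes the statement that the inclusion $C \hookrightarrow X$ is a section of $\pi$), whereas you keep $X/X^\perp$ abstract and pull back along the identification diffeomorphism $\pi \circ j$.
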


\begin{proof}
Let $i: C \to D$ be the inclusion of the core into $D$, and let $\pi: X \to C$ be the surjective submersion sending $s \in X$ to the characteristic leaf containing it, where we have identified the leaf space $X/X^\perp$ with $C$ in the above manner. Let $s: C \to X$ be the inclusion of the core into $X$; this is a section of $\pi$. Finally, let $j: X \to D$ be the inclusion of $X$ into $D$.

The symplectic form $\omega_C$ on $C$ obtained via reduction is characterized by the property that $j^*\omega = \pi^*\omega_C$. Since $i = j \circ s$, we have
\begin{align*}
i^*\omega &= (j \circ s)^*\omega \\
&= s^*(j^*\omega) \\
&= s^*(\pi^*\omega_C) \\
&= (\pi \circ s)^*\omega_C,
\end{align*}
which equals $\omega_C$ since $s$ is a section of $\pi$. This proves the claim.
\end{proof}

Explicitly, the reduction relation $\Lambda: D \red C$ is given by
\begin{equation}\label{red}
\Lambda: s \mapsto s\, \wt\circ_{H} \wt 1^V_{i_{H}(\wt r_V(s))} \text{ for } s \in \wt r_{H}^{-1}(1^V M).
\end{equation}
For future reference, the transpose relation $\Lambda^t: C \cored D$ (which is a coreduction) is given by
\begin{equation}\label{redT}
\Lambda^t: s \mapsto s \wt\circ_{H} \wt 1^V_\lambda, \text{ for $\lambda \in H$ such that } \ell_{H}(\lambda) = r_V(\wt r_{H}(s)).
\end{equation}

The same then holds if we consider the tranposed double groupoid structure, so that the reduction of the coisotropic submanifold $Y = \wt r_V^{-1}(1^{H}M)$ of $D$ can also be identified with the core via the same maps, simply exchanging the roles of $V$ and $H$. The reduction relation $D \red C$ obtained by reducing $Y$ is explicitly given by
\begin{displaymath}
s \mapsto s\,\wt\circ_V\,\wt 1^{H}_{i_{V}(\wt r_{H}(s))} \text{ for } s \in \wt r_V^{-1}(1^{H} M).
\end{displaymath}

Similar results hold for the preimages of units under the target maps. To be clear, let $Z$ now be $\wt\ell_{H}^{-1}(1^V M)$, the preimage of the units of $V$ under the target map of the top groupoid. This is again coisotropic in $D$, and the leaf of the characteristic foliation through a point $s \in Z$ is now given by
\[
\{\wt 1^V_{\lambda}\,\wt\circ_{H}\,s\ |\ \lambda \in r_{H}^{-1}(m)\}
\]
where $m = \ell_V(\wt\ell_{H}(s))$. Similar to the above, we can now easily identify the reduction of $Z$ with the set of elements of $D$ of the form
\begin{center}
\begin{tikzpicture}[scale=.75]
	\node at (1,1) {$s$};

	\tikzset{font=\scriptsize};
	\draw (0,0) rectangle (2,2);
	\node at (-.6,1) {$1^{H}_m$};
	\node at (1,2.4) {$\wt r_{H}(s)$};
	\node at (2.7,1) {$\wt r_V(s)$};
	\node at (1,-.4) {$1^V_m$};
\end{tikzpicture}
\end{center}
which we might call the ``left-core'' $C_L$ of $D$ to distinguish it from the ``right-core'' $C_R\ (:= C)$ previously defined. However, the left-core $C_L$ can be identified with $C_R$ using the composition of the two groupoid inverses on $D$:
\[
\wt i_V \circ \wt i_{H}: C_L \to C_R,
\]
so we again get that the reduction of $Z$ is symplectomorphic to the core $C$. Considering the transpose of $D$, we find that the same is true for the preimage of the units of $H$ under the target of the left groupoid.

In summary, we have shown:
\begin{thrm}
Let $D$ be a symplectic double groupoid with core $C$. Then the reductions of the coisotropic submanifolds
\[
\wt r_{H}^{-1}(1^V M),\ \wt r_V^{-1}(1^{H}M),\ \wt \ell_{H}^{-1}(1^V M), \text{ and } \wt \ell_V^{-1}(1^{H}M)
\]
of $D$ are symplectomorphic to $C$.
\end{thrm}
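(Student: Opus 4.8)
The plan is to deduce all four statements from the single case $X := \wt r_H^{-1}(1^V M)$, which has essentially been treated in the discussion above, by exploiting the two evident symmetries of a symplectic double groupoid: transposition, which interchanges the top and left groupoids and hence the roles of $V$ and $H$, and the two groupoid inverses $\wt i_V$ and $\wt i_H$, which interchange sources and targets. For each of the four coisotropic submanifolds I would carry out the same three verifications: first, that it is coisotropic; second, that its characteristic leaf space is identified, via a cross-section, with an appropriate ``core''; and third, that the reduced symplectic structure matches the one carried by that core as a symplectic submanifold of $D$.

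For $X$ itself, coisotropy is immediate, since $X$ is the preimage of the coisotropic unit submanifold $1^V M \subseteq V$ under the Poisson map $\wt r_H$. Lemma~\ref{lem:fol} gives the characteristic leaves explicitly, and I would observe that the leaf through $s$ meets the core in the unique point obtained by taking $\lambda = i_H(\wt r_V(s))$, the only choice making $\wt r_V(s) \circ_H \lambda$ a left-unit; thus $C$ is a cross-section, $X/X^\perp \cong C$ as manifolds, and the symplectic structures agree by the Proposition above. The case $Y := \wt r_V^{-1}(1^H M)$ then follows verbatim after transposing $D$, since the defining condition of the core---both sources being units---is symmetric under transposition.

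For the target cases I would first record that $Z := \wt\ell_H^{-1}(1^V M)$ is coisotropic for the same reason as $X$, and that the analogue of Lemma~\ref{lem:fol} (proved identically, or obtained by applying $\wt i_H$) describes its characteristic leaves by left-multiplication as $\{\wt 1^V_\lambda \wt\circ_H s : \lambda \in r_H^{-1}(m)\}$. Here the natural cross-section is the left-core $C_L$ of elements whose two targets are units, so that $Z/Z^\perp \cong C_L$. I would then identify $C_L$ with $C = C_R$ by the composite $\wt i_V \circ \wt i_H$, which sends targets to sources. The remaining submanifold $\wt\ell_V^{-1}(1^H M)$ is the transpose of this one.

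The genuine content is concentrated in Lemma~\ref{lem:fol}; once it and its three variants are in hand, the cross-section statements are routine groupoid identities. The step I would watch most carefully is the symplectic compatibility in the target cases: I must check not merely that $\wt i_V \circ \wt i_H$ is a bijection $C_L \to C$ but that it is a symplectomorphism intertwining the two reductions. This follows because the groupoid inverse in a symplectic groupoid is an anti-symplectomorphism of $D$, so $\wt i_V \circ \wt i_H$ is a global symplectomorphism of $D$ carrying the symplectic submanifold $C_L$ onto $C_R$; combined with the analogue of the Proposition above for $Z$, which identifies the reduced form on $C_L$ with its submanifold form, this yields the desired symplectomorphism $Z/Z^\perp \cong C$.
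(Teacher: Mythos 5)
Your proposal is correct and follows essentially the same route as the paper: handle $X = \wt r_H^{-1}(1^V M)$ directly via Lemma~\ref{lem:fol}, the cross-section argument, and the proposition comparing the reduced and submanifold symplectic structures; obtain $Y$ and $\wt\ell_V^{-1}(1^H M)$ by transposition; and identify the reduction of $Z = \wt\ell_H^{-1}(1^V M)$ with the left-core $C_L$, carried onto $C = C_R$ by $\wt i_V \circ \wt i_H$. Your explicit observation that $\wt i_V \circ \wt i_H$ is a symplectomorphism (as a composite of two anti-symplectomorphisms) is a point the paper leaves implicit, but it is the same argument.
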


\begin{ex}
Let us return to Example \ref{ex:main}. The target of the top groupoid is the same as the source, so the above reduction procedure produces the same reduction relation
\[
(T^*e)^t: T^*G \red T^*M
\]
as before. A similar computation to that carried out for the source of the left groupoid $T^*G \rightrightarrows A^*$ shows that the coisotropic submanifold $\wt \ell^{-1}(0_M)$ of $T^*G$ is $N^*\F_r$, where $\F_r$ is the foliation of $G$ given by the $r$-fibers of $G \rightrightarrows M$, and that the reduction relation
\[
T^*G \red T^*M
\]
obtained by reducing $N^*\F_r$ is then the cotangent lift $T^*r$.
\end{ex}

\begin{rmk}\label{gen-cot}
These examples show that in the case of the standard double groupoid structure on the cotangent bundle $T^*G$ of a Lie groupoid $G \rightrightarrows M$, the reduction relations arising from the various ways of realizing the core $T^*M$ are precisely the cotangent lifts of the structure maps of $G$. The reduction relations obtained by applying this procedure to a general cotangent double groupoid as in Theorem~\ref{thrm:ctdbl} can also be described as certain cotangent lifts; this will be elaborated on in~\cite{C}.
\end{rmk}

\begin{ex}
Consider the double groupoid of Example~\ref{ex:inertia}. The coisotropic submanifold $X$ of $\bar{T^*G} \times T^*G$ is $\bar{T^*G} \times A^*$, whose reduction is $T^*G$ since $A^*$ is lagrangian in $T^*G$. The reduction relation $\bar{T^*G} \times T^*G \red T^*G$ is $id \times A^*$.

In the transposed double groupoid, the coisotropic submanifold $Y$ of $\bar{T^*G} \times T^*G$ is the fiber product of $\wt r: T^*G \to A^*$ with itself. Let $((g,\xi),(h,\eta))$ be an element of this fiber product. Then in particular $r(g) = r(h)$, so $gh^{-1}$ is defined under the multiplication on $G$. The leaf of the characteristic foliation of $Y$ containing $((g,\xi),(h,\eta))$ consists of elements of the form
$$((g,\xi)\circ(k,\omega),(h,\eta)\circ(k,\omega))$$
where $\circ$ is the product on $T^*G$ and $(k,\omega) \in T^*G$ satisfies $\wt\ell(k,\omega) = \wt r(g,\xi) = \wt r(h,\eta)$. The element of the core $T^*G$ associated with this leaf is
$$(g,\xi) \circ (h^{-1},di^*_{h^{-1}}\eta)$$
where $i$ is the inverse of $G$. As mentioned at the end of Remark~\ref{gen-cot}, the resulting reduction relation $\bar{T^*G} \times T^*G \red T^*G$ can be described as a cotangent lift---in particular, it is the cotangent lift of the \emph{smooth relation} $G \times G \to G$ given by $(g,h) \mapsto gh^{-1} \text{ for $g,h \in G$ such that } r(g)=r(h)$.
\end{ex}

\begin{ex}
In a symplectic double groupoid of the form:
\begin{center}
\begin{tikzpicture}[>=angle 90,scale=.85,baseline=(current  bounding  box.center)]
	\node at (0,0) {$P^*$};
	\node at (2,0) {$pt,$};
	\node at (0,2) {$S$};
	\node at (2,2) {$P$};

	\tikzset{font=\scriptsize};
	\draw[->] (.4,.08) -- (1.6,.08);
	\draw[->] (.4,-.08) -- (1.6,-.08);
	
	\draw[->] (-.08,1.6) -- (-.08,.4);
	\draw[->] (.08,1.6) -- (.08,.4);
	
	\draw[->] (.4,2+.08) -- (1.6,2+.08);
	\draw[->] (.4,2-.08) -- (1.6,2-.08);
	
	\draw[->] (2-.08,1.6) -- (2-.08,.4);
	\draw[->] (2+.08,1.6) -- (2+.08,.4);
\end{tikzpicture}
\end{center}
$P$ and $P^*$ are dual Poisson Lie groups. The coisotropic submanifold $X := \widetilde{r}_{P^*}^{-1}(1^{P} pt)$ of $S$ in this case is lagrangian and can be identified with the embedding of $P^*$ into $S$. Being lagrangian, its reduction is a point which is indeed the core of $S$. The reduction relation $S \red pt$ is given by $P^*$. The transposed structure produces the canonical relation $S \red pt$ given by $P$ as a lagrangian submanifold of $S$.
\end{ex}

We thus have multiple ways of recovering the core of $D$ by reducing certain coisotropic submanifolds, and each such way produces a canonical relation $D \red C$. We now show that the left groupoid structure $D \rightrightarrows H$ descends to a well-defined groupoid structure on $C$ which agrees with Brown and Mackenzie's structure from Theorem~\ref{core-grpd}.

First, the groupoid product on $C \rightrightarrows M$ should come from a relation $C \times C \to C$, and using the reduction $\Lambda: D \red C$ we have defined there is a natural way of obtaining this desired relation:

\begin{prop}
The canonical relation $m: C \times C \to C$ given by the composition
\begin{equation}\label{core-product}
\begin{tikzpicture}[>=angle 90,baseline=(current  bounding  box.center)]
	\node (UL) at (0,0) {$C \times C$};
	\node (UR) at (3,0) {$D \times D$};
	\node (LL) at (6,0) {$D$};
	\node (LR) at (8,0) {$C,$};

	\tikzset{font=\scriptsize};
	\draw[->] (UL) to node [above] {$\Lambda^t \times \Lambda^t$} (UR);
	\draw[->] (UR) to node [above] {$\wt m_V$} (LL);
	\draw[->] (LL) to node [above] {$\Lambda$} (LR);
\end{tikzpicture}
\end{equation}
where $\wt m_V$ is the product of the left grouopid $D \rightrightarrows H$ viewed as a canonical relatoin, is Brown and Mackenzie's groupoid product on $C$.
\end{prop}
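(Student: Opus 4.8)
The plan is to unwind the composite relation~(\ref{core-product}) set-theoretically and show it coincides with the graph of Brown and Mackenzie's partial product. Fix $(c,c') \in C \times C$, and write $m_c, m_{c'} \in M$ for their base points. By the description~(\ref{redT}) of $\Lambda^t$, a point of the composite lying over $(c,c')$ is produced by choosing lifts $d = c \wt\circ_H \wt 1^V_\lambda \in \Lambda^t(c)$ and $d' = c' \wt\circ_H \wt 1^V_{\lambda'} \in \Lambda^t(c')$, forming $e = d \wt\circ_V d'$, and then applying $\Lambda$. First I would record the composability constraint for $\wt m_V$: since $\wt r_V(d) = \lambda$ and $\wt\ell_V(d') = \wt\ell_V(c') \circ_H \lambda'$, the product $e = d \wt\circ_V d'$ is defined exactly when $\lambda = \wt\ell_V(c') \circ_H \lambda'$. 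Combined with the constraints $\ell_H(\lambda) = m_c$ and $\ell_H(\lambda') = m_{c'}$ coming from $\Lambda^t$, together with the homomorphism identity $\ell_H(\wt\ell_V(c')) = \ell_V(\wt\ell_H(c'))$, this shows that the domain of the composite is precisely the set of core-composable pairs, matching the domain of the Brown--Mackenzie product.

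The heart of the argument is to evaluate $\Lambda(e)$ and show it is independent of the auxiliary choices $\lambda,\lambda'$. Since $\wt r_V(e) = \lambda'$, formula~(\ref{red}) gives $\Lambda(e) = e \wt\circ_H \wt 1^V_{i_H(\lambda')}$. The key maneuver is to split the left-unit horizontally as $\wt 1^V_{i_H(\lambda')} = \wt 1^V_{i_H(\lambda')} \wt\circ_V \wt 1^V_{i_H(\lambda')}$ and then apply the interchange (compatibility) law of the double groupoid:
\[
\Lambda(e) = (d \wt\circ_V d') \wt\circ_H (\wt 1^V_{i_H(\lambda')} \wt\circ_V \wt 1^V_{i_H(\lambda')}) = (d \wt\circ_H \wt 1^V_{i_H(\lambda')}) \wt\circ_V (d' \wt\circ_H \wt 1^V_{i_H(\lambda')}).
\]
I would then simplify each factor using associativity of the top product, the homomorphism property $\wt 1^V_\mu \wt\circ_H \wt 1^V_\nu = \wt 1^V_{\mu \circ_H \nu}$, and the identities $\lambda \circ_H i_H(\lambda') = \wt\ell_V(c')$ and $\lambda' \circ_H i_H(\lambda') = 1^H_{m_{c'}}$. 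This collapses the first factor to $c \wt\circ_H \wt 1^V_{\wt\ell_V(c')}$ and the second to $c'$, yielding
\[
\Lambda(e) = (c \wt\circ_H \wt 1^V_{\wt\ell_V(c')}) \wt\circ_V c',
\]
an expression manifestly independent of $\lambda$ and $\lambda'$. In particular this shows the composite relation is the graph of a well-defined (single-valued) partial map.

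Finally I would match this with Brown and Mackenzie's product by simplifying their defining $2 \times 2$ diagram. Computing it column by column, the right column $\wt 1^H_{\wt\ell_H(c')} \wt\circ_H c'$ reduces to $c'$ by the left-unit law of the top groupoid (the unit sits at the target $\wt\ell_H(c')$ of $c'$), while the left column is exactly $c \wt\circ_H \wt 1^V_{\wt\ell_V(c')}$; composing these horizontally produces $(c \wt\circ_H \wt 1^V_{\wt\ell_V(c')}) \wt\circ_V c'$, which agrees with the expression for $\Lambda(e)$ above. Together with the domain computation of the first step, this identifies the composite~(\ref{core-product}) with Brown and Mackenzie's product on $C$.

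I expect the main obstacle to be organizational rather than conceptual: keeping the two families of structure maps and their composability and unit conventions straight, and in particular recognizing that the reduction $\Lambda$ can be transported through the horizontal product $\wt m_V$ via the interchange law once the unit $\wt 1^V_{i_H(\lambda')}$ is split horizontally. This is precisely the step that forces the dependence on the lift parameters $\lambda,\lambda'$ to cancel, turning the a priori relation into an honest map; the one point genuinely requiring care is checking that the interchange law applies, i.e.\ that the relevant $2 \times 2$ array is actually composable in both orders.
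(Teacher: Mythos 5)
Your proposal is correct and takes essentially the same route as the paper's own proof: the same unwinding of $\Lambda^t \times \Lambda^t$, the same composability condition $\lambda = \wt\ell_V(c')\,\circ_H\,\lambda'$, the same unit-splitting/interchange maneuver collapsing the output to $(c\,\wt\circ_H\,\wt 1^V_{\wt\ell_V(c')})\,\wt\circ_V\,c'$, and the same identification with Brown and Mackenzie's $2\times 2$ diagram. The only step the paper makes explicit that you elide is the one-line verification (using that $\wt r_H$ is a homomorphism) that $d\,\wt\circ_V\,d'$ lies in $X = \wt r_H^{-1}(1^V M)$, which is needed before $\Lambda$ can be applied via~(\ref{red}).
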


\begin{proof}
Let $s, s' \in C$. Applying the relation $\Lambda^t \times \Lambda^t$ gives
\[
(s,s') \mapsto (s\,\wt\circ_{H}\,\wt 1^V_\lambda, s'\,\wt\circ_{H}\,\wt 1^V_{\lambda'})
\]
where $\lambda, \lambda'$ are as in~(\ref{redT}). Now, these are composable under $\wt m_V$ when
\[
\wt r_V(s\,\wt\circ_{H}\,\wt 1^V_\lambda) = \wt r_V(s) \circ_{H} \lambda = \lambda \text{ equals } \wt \ell_V(s'\,\wt\circ_{H}\,\wt 1^V_{\lambda'}) = \wt\ell_V(s') \circ_{H} \lambda',
\]
where we have used the fact that $\wt r_V(s)$ is a unit. From this we get the condition that
\[
\lambda = \wt\ell_V(s') \circ_{H}\lambda'.
\]
The relation $\wt m_V$ then produces
\[
(s\,\wt\circ_{H}\,\wt 1^V_\lambda) \wt\circ_V (s'\,\wt\circ_{H}\,\wt 1^V_{\lambda'}) = \left(s\,\wt\circ_{H}\,\wt 1^V_{\wt\ell_V(s') \circ_{H}\lambda'}\right) \wt\circ_V (s'\,\wt\circ_{H}\,\wt 1^V_{\lambda'}).
\]

Now, applying the map $\wt r_H$ to this gives
$$\wt r_H(\wt 1^V_{\wt\ell_V(s') \circ_{H}\lambda'})\,\wt \circ_V\,\wt r_H(\wt 1^V_{\lambda'}) = 1^V_{r_H(\lambda')}\,\circ_V\,1^V_{r_H(\lambda')} = 1^V_{r_H(\lambda')},$$ so that the element above is already in the domain $X$ of $\Lambda$. Applying the final relation $\Lambda$ then gives
$$\left[\left(s\,\wt\circ_{H}\,\wt 1^V_{\wt\ell_V(s') \circ_{H}\lambda'}\right) \wt\circ_V (s'\,\wt\circ_{H}\,\wt 1^V_{\lambda'})\right]\,\wt\circ_H\,\wt 1^V_{i_H(\lambda')}.$$
Expressing this in square notation, we have:
\begin{center}
\begin{tikzpicture}
	\draw (0,0) rectangle (2.75,1);
	\node at (1.375,.5) {$s\,\wt\circ_{H}\,\wt 1^V_{\wt\ell_V(s') \circ_{H}\lambda'}$};
	\draw (2.75,0) rectangle (4.5,1);
	\node at (3.625,.5) {$s'\,\wt\circ_{H}\,\wt 1^V_{\lambda'}$};
	\draw (0,1) rectangle (4.5,2);
	\node at (2.25,1.5) {$\wt 1^V_{i_H(\lambda')}$};
	
	\node at (5,1) {$=$};
	
	\draw (5.5,0) rectangle (8.25,1);
	\node at (6.875,.5) {$s\,\wt\circ_{H}\,\wt 1^V_{\wt\ell_V(s') \circ_{H}\lambda'}$};
	\draw (8.25,0) rectangle (10.25,1);
	\node at (9.25,.5) {$s'\,\wt\circ_{H}\,\wt 1^V_{\lambda'}$};
	\draw (5.5,1) rectangle (8.25,2);
	\node at (6.875,1.5) {$\wt 1^V_{i_H(\lambda')}$};
	\draw (8.25,1) rectangle (10.25,2);
	\node at (9.25,1.5) {$\wt 1^V_{i_H(\lambda')}$};
	
	\node at (10.75,1) {$=$};
	
	\draw (11.25,.5) rectangle (13.25,1.5);
	\node at (12.25,1) {$s\,\wt\circ_H\,\wt 1^V_{\wt\ell_V(s')}$};
	\draw (13.25,.5) rectangle (14.25,1.5);
	\node at (13.75,1) {$s'$};
\end{tikzpicture}
\end{center}
where in the first step we decompose the top box horizontally and in the second we compose vertically. Using $s' = \wt 1^H_{\ell_H(s')}\,\wt\circ_H\, s'$, we can write this final expression as
\begin{center}
\begin{tikzpicture}
	\draw (-4,.5) rectangle (-2,1.5);
	\node at (-3,1) {$s\,\wt\circ_H\,\wt 1^V_{\wt\ell_V(s')}$};
	\draw (-2,.5) rectangle (-1,1.5);
	\node at (-1.5,1) {$s'$};
	
	\node at (-.5,1) {$=$};
	
	\draw (0,0) rectangle (1.5,1);
	\node at (.75,.5) {$s$};
	\draw (1.5,0) rectangle (3,1);
	\node at (2.25,.5) {$\wt 1^{H}_{\wt\ell_{H}(s')}$};
	\draw (0,1) rectangle (1.5,2);
	\node at (.75,1.5) {$\wt 1^V_{\wt\ell_V(s')}$};
	\draw (1.5,1) rectangle (3,2);
	\node at (2.25,1.5) {$s'$};
\end{tikzpicture}
\end{center}
which gives Brown and Mackenzie's core groupoid product as claimed.
\end{proof}

\begin{rmk}
The proof of this proposition demonstrates the following. Take two elements $s$ and $s'$ of the leaf space $C = X/X^\perp$ which we want to compose and pick some $d' \in X$ in the leaf $s'$. First, there is precisely one element $d \in X$ in the leaf $s$ which is composable under $\wt m_V$ with $d'$. Composing these gives the element $d\,\wt\circ_V\,d'$ of $X$, and the leaf which this determines in $C$ does not depend on the choice of $d'$. Denoting this leaf by $s \circ s'$ defines the groupoid product on $C$.
\end{rmk}

\begin{rmk}
Instead of using the relation $\Lambda: D \red C$ obtained by reducing $X = \wt r_H^{-1}(1^V M)$, we could have looked at the transposed double groupoid and used the relation $D \to C$ obtained by reducing $Y = \wt r_V^{-1}(1^H M)$. The resulting expression for the product on $C$ agrees with the alternate expression for the core groupoid product found in~\cite{BM}.
\end{rmk}

Note that $m: C \times C \to C$ is a canonical relation simply because we have defined it as a composition of such. The unit $e: pt \to C$ is obtained as the composition
\begin{equation}\label{core-unit}
\begin{tikzpicture}[>=angle 90,baseline=(current  bounding  box.center)]
	\node (UL) at (0,1) {$pt$};
	\node (UM) at (2,1) {$D$};
	\node (UR) at (4,1) {$C,$};

	\tikzset{font=\scriptsize};
	\draw[->] (UL) to node [above] {$H$} (UM);
	\draw[->] (UM) to node [above] {$\Lambda$} (UR);
\end{tikzpicture}
\end{equation}
where $H$ denotes the image of the lagrangian embedding $\wt 1^V: H \to D$. A direct computation shows that $\Lambda \circ H = M$ viewed as a lagrangian submanifold of $C$, as we expect for the unit submanifold of $C$.

Now, the left unit property of $e$ follows from the commutativity of the diagram
\begin{center}
\begin{tikzpicture}[>=angle 90]
	\node (L1) at (0,0) {$C$};
	\node (L2) at (3,0) {$D$};
	\node (U1) at (0,2) {$C \times C$};
	\node (U2) at (3,2) {$D \times D$};
	\node (U3) at (6,1) {$D$};
	\node (U4) at (9,1) {$C$};

	\tikzset{font=\scriptsize};
	\draw[->] (U1) to node [above] {$\Lambda^t \times \Lambda^t$} (U2);
	\draw[->] (U2) to node [above] {$\wt m_V$} (U3);
	\draw[->] (U3) to node [above] {$\Lambda$} (U4);
	\draw[->] (L1) to node [above] {$\Lambda^t$} (L2);
	\draw[->] (L1) to node [left] {$e \times id$} (U1);
	\draw[->] (L2) to node [left] {$H \times id$} (U2);
	\draw[->] (L2) to node [below] {$id$} (U3);
	\draw[->] (U1) to [bend left=30] node [above] {$m$} (U4);
	\draw[->] (L1) to [bend right=20] node [below] {$id$} (U4);
\end{tikzpicture}
\end{center}
where a direct computation shows that
$$\Lambda^t \circ \Lambda \circ H = H,$$
as required for the commutativity of the square on the left. This equality in particular says that the leaf of the characteristic foliation of $X$ through an element of $H \subseteq X$ consists only of elements of $H$. A similar diagram gives the right unit property of $e$. 

Finally, the $*$-structure $s: \overline{C} \to C$ on $(C,m,e)$ is given by the composition
\begin{equation}\label{core-star}
\begin{tikzpicture}[>=angle 90,baseline=(current  bounding  box.center)]
	\node (1) at (0,1) {$\overline{C}$};
	\node (2) at (2,1) {$\overline{D}$};
	\node (3) at (4,1) {$D$};
	\node (4) at (6,1) {$C.$};

	\tikzset{font=\scriptsize};
	\draw[->] (1) to node [above] {$\Lambda^t$} (2);
	\draw[->] (2) to node [above] {$\wt i_V$} (3);
	\draw[->] (3) to node [above] {$\Lambda$} (4);
\end{tikzpicture}
\end{equation}
A direct computation shows that $s^2 = id$, and we now verify that:

\begin{prop}
The canonical relation $s: \bar C \to C$ above agrees with Brown and Mackenzie's core groupoid inverse.
\end{prop}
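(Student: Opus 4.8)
The plan is to trace a core element through the composite $\Lambda\circ\wt i_V\circ\Lambda^t$, exactly as in the proof of the product proposition above, and then to match the output against Brown and Mackenzie's inverse. Fix $s\in C$ and write $m:=r_V(\wt r_H(s))$. By~(\ref{redT}) the coreduction $\Lambda^t$ carries $s$ to the family $s\,\wt\circ_H\,\wt 1^V_\lambda$ with $\lambda\in\ell_H^{-1}(m)$, i.e.\ to the whole characteristic leaf $X^\perp_s$. Next I apply $\wt i_V$. The crucial point is that $\wt i_V$, as a structure map of the left groupoid $D\rightrightarrows H$, is an \emph{honest} homomorphism of the top groupoid $D\rightrightarrows V$, so it distributes over $\wt\circ_H$; since it fixes the left-unit $\wt 1^V_\lambda$ this gives
\[
\wt i_V\!\left(s\,\wt\circ_H\,\wt 1^V_\lambda\right)=\wt i_V(s)\,\wt\circ_H\,\wt 1^V_\lambda .
\]
Moreover $\wt r_H(\wt i_V(s))=i_V(\wt r_H(s))=1^V_m$, so $\wt i_V(s)\in X$, and by Lemma~\ref{lem:fol} the elements above (as $\lambda$ varies) are precisely the points of the single leaf $X^\perp_{\wt i_V(s)}$.

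Applying the final relation $\Lambda$, which collapses each leaf to its unique core representative, the output is therefore independent of $\lambda$; using~(\ref{red}), the associativity of $\wt\circ_H$, and $\wt r_V(\wt i_V(s))=\wt\ell_V(s)$, it equals
\[
\wt i_V(s)\,\wt\circ_H\,\wt 1^V_{i_H(\wt\ell_V(s))}.
\]
This $\lambda$-independence is exactly what certifies that the composite relation is single-valued, as an inverse map must be.

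It then remains to recognize this element as the core groupoid inverse. Reading off its sides in square notation, I would check that it is the core element whose two nontrivial sides are $i_H(\wt\ell_V(s))$ and $i_V(\wt\ell_H(s))$, with common corner the opposite corner of $s$; that is, the square obtained from $s$ by inverting its left side in $H$, inverting its bottom side in $V$, and interchanging the two distinguished corners. This is precisely Brown and Mackenzie's core inverse from~\cite{BM}. Alternatively, one can confirm the identification without quoting their formula by composing this element with $s$ under the core product already established and verifying that the result is a core unit, as required of a groupoid inverse.

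The main obstacle is the second move above: one must be sure that $\wt i_V$ passes through $\wt\circ_H$ as an honest homomorphism rather than an anti-homomorphism, and that it fixes the left-units. This is where the full double groupoid compatibility enters, just as in Lemma~\ref{lem:fol}---it is because $\wt i_V$ is built entirely from structure maps of the left groupoid, all of which are homomorphisms of the top groupoid, that the three-step computation telescopes so cleanly. Everything after that point is the same square-notation bookkeeping used for the product.
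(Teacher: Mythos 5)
Your proposal is correct and follows essentially the same route as the paper: apply $\Lambda^t$, push $\wt i_V$ through $\wt\circ_H$ via the double groupoid compatibility (it is a homomorphism of the top groupoid fixing the units $\wt 1^V_\lambda$), then apply $\Lambda$ to land on $\wt i_V(s)\,\wt\circ_H\,\wt 1^V_{i_H(\wt\ell_V(s))}$, which is Brown and Mackenzie's core inverse. Your only variation---using Lemma~\ref{lem:fol} to see that the image of the leaf is again a single leaf and then evaluating $\Lambda$ at the convenient representative $\wt i_V(s)$, rather than simplifying $\Lambda$ applied to a general leaf element as the paper does---is equivalent, and in fact makes explicit the leaf-preservation observation that the paper records only in the remark following its proof.
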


\begin{proof}
First, for $s \in C$ we have
$$\wt i_V \circ \Lambda^t: s \mapsto \wt i_V\left(s\,\wt\circ_H\,\wt 1^V_\lambda\right) = \wt i_V(s)\,\wt\circ_H\,\wt 1^V_\lambda$$
where $\lambda$ is as in~(\ref{redT}). The element above is in the domain $X$ of $\Lambda$, and applying $\Lambda$ gives:
$$\left(\wt i_V(s)\,\wt\circ_H\,\wt 1^V_\lambda\right)\,\wt\circ_H\,\wt1^V_{i_H(\wt\ell_V(s) \circ_H \lambda)} = \wt i_V(s)\,\wt\circ_H\,\left(\wt 1^V_\lambda\,\wt\circ_H\,\wt1^V_{i_H(\lambda)}\wt\circ_H\wt 1^V_{i_H(\wt\ell_V(s))}\right) = \wt i_V(s)\,\wt\circ_H\,\wt1^V_{i_H(\wt\ell_V(s))},$$
which is the expression for the core groupoid inverse of $s$.
\end{proof}

\begin{rmk}
The proof of this proposition shows that the map $\wt i_V$ preserves the foliation $X^\perp$, meaning that it sends leaves to leaves. This is why $\wt i_V$ descends to a map $\bar C \to C$ on the leaf space, which is the core groupoid inverse. Doing the same after replacing $\Lambda$ with the relation $D \to C$ obtained by reducing $Y = \wt r_V^{-1}(1^H M)$ gives the alternate expression for the core groupoid inverse found in~\cite{BM}.
\end{rmk}

Putting this all together gives:

\begin{thrm}
For a symplectic double groupoid $D$ with core $C$, the left groupoid structure on $D \rightrightarrows H$ descends to a groupoid structure on the leaf space $C = X/X^\perp$ of the characteristic foliation of $X = \wt r_H^{-1}(1^VM)$. In particular, the canonical relations $(m,e,s)$ defined respectively by~(\ref{core-product}),(\ref{core-unit}), and (\ref{core-star}) respectively give $C$ the structure of a strongly positive symplectic $*$-monoid which agrees the standard symplectic core groupoid structure.
\end{thrm}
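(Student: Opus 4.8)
The plan is to assemble the pieces already in place. We have defined three canonical relations
\[
m: C \times C \to C, \quad e: pt \to C, \quad s: \bar C \to C
\]
by the compositions (\ref{core-product}), (\ref{core-unit}), and (\ref{core-star}), and the two preceding propositions together with the discussion of the unit show that, \emph{as relations}, $m$ is the graph of Brown and Mackenzie's core groupoid product, $e$ is the unit submanifold $M \subseteq C$ (since $\Lambda \circ H = M$), and $s$ is the graph of their core groupoid inverse (with $s^2 = id$). Thus each of the three relations coincides with the corresponding structure map of the Brown--Mackenzie core groupoid $C \rightrightarrows M$, which by Mackenzie's theorem is already a symplectic groupoid. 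The strategy is therefore to check that $(C,m,e,s)$ satisfies the axioms of a strongly positive symplectic $*$-monoid by transporting the groupoid identities of $C \rightrightarrows M$, and then to read off the agreement with the standard structure from Zakrzewski's correspondence.

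First I would dispose of the strong transversality hypotheses built into the definitions of a symplectic monoid and a $*$-structure. Every composition appearing in (\ref{core-product})--(\ref{core-star}) and in the axiom diagrams is built from $\Lambda$, $\Lambda^t$, and graphs of groupoid structure maps; since $\Lambda$ is a reduction and $\Lambda^t$ a coreduction, the earlier Remark guarantees the relevant pairs are strongly transversal, and the remaining compositions are those occurring in a symplectic groupoid, which are strongly transversal by the Example identifying a symplectic groupoid with a symplectic monoid. With well-definedness in hand, associativity of $m$ follows from associativity of the core groupoid product; the left and right unit axioms follow from the commuting diagrams already exhibited, the key input being the identity $\Lambda^t \circ \Lambda \circ H = H$; the defining $*$-structure diagram is the relational form of the groupoid identity $(g \circ g')^{-1} = (g')^{-1} \circ g^{-1}$; and strong positivity is the relational form of $g \circ g^{-1} = \text{unit}$. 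All of these are identities among the standard structure maps of $C \rightrightarrows M$, hence hold.

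Putting these together, $(C,m,e,s)$ is a strongly positive symplectic $*$-monoid, and by Zakrzewski's theorem it corresponds to a symplectic groupoid. Since $m$, $e$, and $s$ are by construction the product, unit, and inverse of the Brown--Mackenzie groupoid $C \rightrightarrows M$, this corresponding symplectic groupoid is exactly the standard symplectic core groupoid, as claimed; the descent statement is then simply the observation that these relations, being defined through the reduction projection $\pi: X \to C$, are precisely the push-forwards under $\pi$ of the left groupoid operations $\wt m_V$ and $\wt i_V$ on $D$, as spelled out in the Remarks following the two propositions. The one point genuinely requiring care---and hence the main obstacle---is verifying that all the compositions are strongly transversal, so that the axiom diagrams are honest equalities of canonical relations rather than mere set-theoretic coincidences of the underlying relations; once this is secured by the reduction and coreduction properties of $\Lambda$ and $\Lambda^t$, the remainder of the argument is a direct translation of the already-established groupoid identities on $C \rightrightarrows M$.
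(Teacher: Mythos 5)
Your argument is correct in substance, but it takes a genuinely different route from the paper. You apply Zakrzewski's correspondence to $C$ itself: the earlier propositions identify $m$, $e$, $s$ with Brown--Mackenzie's product, unit submanifold, and inverse; Mackenzie's theorem (together with the proposition that the reduced and submanifold symplectic structures on $C$ agree) says $C \rightrightarrows M$ with this structure is a symplectic groupoid; and the groupoid-to-monoid direction of Zakrzewski's theorem then hands you all the axioms, including the strong transversality of the axiom diagrams. The paper instead applies Zakrzewski's correspondence to $D$ --- its left groupoid structure makes $(D, \wt m_V, H, \wt i_V)$ a strongly positive symplectic $*$-monoid --- and \emph{descends} each axiom to $C$ through $\Lambda$ and $\Lambda^t$, the key computational inputs being the intertwining identity $\Lambda \circ \wt i_V = s \circ \Lambda$ and the fact that $\Lambda \circ \Lambda^t = id_C$; strong positivity then requires a separate direct verification of the outermost square (which, as the paper remarks, commutes even though the top and bottom squares of that diagram do not). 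What your route buys is brevity. What it gives up is precisely the content behind the word ``descends'': by quoting Mackenzie's theorem you take the known symplectic groupoid structure on the core as an input, so your argument verifies agreement with a known strongly positive $*$-monoid rather than showing that the reduction procedure by itself produces one from the structure on $D$. The paper's descent argument uses only the $*$-monoid structure on $D$ and the reduction relations, which is what makes the construction self-contained and is the template for the generalizations announced at the end of the paper.

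One detail in your transversality discussion is wrong, although it does not sink your argument. The Remark covers a pair $(R,R')$ only when $R$ is a reduction or $R'$ is a coreduction, and several pairs you need are of neither type. In~(\ref{core-product}) the pair $(\Lambda^t \times \Lambda^t, \wt m_V)$ is not covered: $\Lambda^t \times \Lambda^t$ is a coreduction but not a reduction (it is not coinjective), and $\wt m_V$ is a reduction but not a coreduction (its transpose is not surjective, since not every pair in $D \times D$ is composable). Worse, in~(\ref{core-unit}) the pair $(H,\Lambda)$ actually \emph{fails} transversality: $H \subseteq X$, and the characteristic leaves through points of $H$ stay inside $H$ (this is the content of $\Lambda^t \circ \Lambda \circ H = H$), so at $h \in H$ one only gets $T_hH + T_hX = T_hX \neq T_hD$. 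The composition $\Lambda \circ H = M$ is nonetheless a perfectly good canonical relation, but that is checked by the direct computation in the paper, not by the Remark. Since in your scheme the axiom-level transversality really comes from Zakrzewski's theorem applied to $C$, you should simply delete the appeal to the Remark; it is not ``the main obstacle,'' and it cannot do the work you assign to it.
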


\begin{proof}
We have seen that $(C,m,e)$ is a symplectic monoid reproducing Brown and Mackenzie's core groupoid product. Hence all that remains to be shown is that $s$ is a strongly postitive $*$-structure for this monoid.

Consider the diagram:
\begin{center}
\begin{tikzpicture}[>=angle 90,scale=.75]
	\node (ULC) at (0,3) {$\overline{C} \times \overline{C}$};
	\node (UC) at (3,3) {$\overline{C} \times \overline{C}$};
	\node (URC) at (7,3) {$C \times C$};
	\node (LLC) at (0,-3) {$\overline{C}$};
	\node (LRC) at (7,-3) {$C$};

	\node (UL) at (0,1) {$\overline{D} \times \overline{D}$};
	\node (U) at (3,1) {$\overline{D} \times \overline{D}$};
	\node (UR) at (7,1) {$D \times D$};
	\node (LL) at (0,-1) {$\overline{D}$};
	\node (LR) at (7,-1) {$D$};

	\tikzset{font=\scriptsize};
	\draw[->] (ULC) to node [above] {$\sigma$} (UC);
	\draw[->] (UC) to node [above] {$s \times s$} (URC);
	\draw[->] (ULC) to [bend right=80] node [left] {$m$} (LLC);
	\draw[->] (URC) to [bend left=80] node [right] {$m$} (LRC);
	\draw[->] (LLC) to node [above] {$s$} (LRC);

	\draw[->] (UL) to node [above] {$\sigma$} (U);
	\draw[->] (U) to node [above] {$\wt i_V \times \wt i_V$} (UR);
	\draw[->] (UL) to node [left] {$\wt m_V$} (LL);
	\draw[->] (UR) to node [right] {$\wt m_V$} (LR);
	\draw[->] (LL) to node [above] {$\wt i_V$} (LR);
	
	\draw[->] (ULC) to node [left] {$\Lambda^t \times \Lambda^t$} (UL);
	\draw[->] (LL) to node [left] {$\Lambda$} (LLC);
	\draw[->] (URC) to node [right] {$\Lambda^t \times \Lambda^t$} (UR);
	\draw[->] (LR) to node [right] {$\Lambda$} (LRC);
\end{tikzpicture}
\end{center}
where $\sigma$ is the symplectomorphism exchanging components. The middle square commutes since $D$ is a symplectic $*$-monoid with product $\wt m_V$ and $*$-structure $\wt i_V$. The top and bottom squares commute since
$$\Lambda \circ \wt i_V = s \circ \Lambda$$
as a direct computation shows. This implies the commutativity of the outermost square, which says that $s$ is a $*$-structure for $(C,m,e)$. Similarly, the commutativity of the middle square in the diagram
\begin{center}
\begin{tikzpicture}[>=angle 90,scale=.75]
	\node (ULC) at (0,3) {$C \times \overline{C}$};
	\node (URC) at (4,3) {$C \times C$};
	\node (LLC) at (0,-3) {$pt$};
	\node (LRC) at (4,-3) {$C$};    

	\node (UL) at (0,1) {$D \times \overline{D}$};
	\node (UR) at (4,1) {$D \times D$};
	\node (LL) at (0,-1) {$pt$};
	\node (LR) at (4,-1) {$D$};

	\tikzset{font=\scriptsize};
	\draw[->] (ULC) to node [above] {$id \times s$} (URC);
	\draw[->] (LLC) to [bend left=80] node [left] {$$} (ULC);
	\draw[->] (URC) to [bend left=80] node [right] {$m$} (LRC);
	\draw[->] (LLC) to node [above] {$e$} (LRC);
	
	\draw[->] (UL) to node [above] {$id \times \wt i_V$} (UR);
	\draw[->] (LL) to node [left] {$$} (UL);
	\draw[->] (UR) to node [right] {$\wt m_V$} (LR);
	\draw[->] (LL) to node [above] {$H$} (LR);
	
	\draw[->] (UL) to node [left] {$\Lambda \times \Lambda$} (ULC);
	\draw[->] (URC) to node [right] {$\Lambda^t \times \Lambda^t$} (UR);
	\draw[->] (LLC) to node [left] {$$} (LL);
	\draw[->] (LR) to node [right] {$\Lambda$} (LRC);
\end{tikzpicture}
\end{center}
where $pt \to D \times \bar D$ and $pt \to C \times \bar C$ are given by the diagonals, follows from the fact that $\wt i_V$ is a strongly positive $*$-structure. One can check that the outermost square commute as well, which says that $s$ is strongly positive as claimed.
\end{proof}

\begin{rmk}
It is interesting to note that in the final diagram above, the outermost square commutes even though the top and bottom squares do not.
\end{rmk}

In a forthcoming paper~\cite{C} (see also~\cite{C1}) we make use of this reduction approach to the core in order to give a characterization of symplectic double groupoids in terms of the symplectic category. The canonical relations arising from these reductions will form the structure morphisms of a groupoid-like object $D \rightrightarrows C$ in the symplectic category.

\end{document}